\documentclass[hidelinks]{amsart}

\usepackage{amsmath}
\usepackage{amsthm}
\usepackage{orcidlink}
\usepackage{amssymb}
\usepackage{hyperref}
\usepackage{xcolor}
\usepackage{footmisc}
\usepackage{orcidlink}
\usepackage{xfrac}

\def\R{{\mathbf R}}

\def\d{{\partial}}
\def\eps{\varepsilon}

\theoremstyle{plain}
\newtheorem{theorem}{Theorem}[section]

\newtheorem{lemma}[theorem]{Lemma}

\newtheorem{proposition}[theorem]{Proposition}

\theoremstyle{remark}
\newtheorem{remark}[theorem]{Remark}

\numberwithin{equation}{section}

\title[Stability of dispersive boundary layers]{Stability of dispersive boundary layers for scalar conservation laws in one space dimension}

\author[P.~Antonelli]{Paolo Antonelli\orcidlink{0000-0001-8211-8296}}
\address{Gran Sasso Science Institute, viale Francesco Crispi, 7, 67100 L'Aquila, Italy}
\email{paolo.antonelli@gssi.it}

\author[P.~Marcati]{Pierangelo Marcati\orcidlink{0000-0001-6528-6562}}
\address{Gran Sasso Science Institute, viale Francesco Crispi, 7, 67100 L'Aquila, Italy}
\email{pierangelo.marcati@gssi.it}

\author[L.~V.~Spinolo]{Laura V.~Spinolo\orcidlink{0000-0002-7696-8873}}
\address{L.V.S. CNR-IMATI ``E. Magenes'', via Ferrata 5, I-27100 Pavia, Italy.}
\email{spinolo@imati.cnr.it}

\subjclass[2020]{Primary: 35L65 ; Secondary: 35Q53, 35B35 .}
 \keywords{scalar conservation law, dispersion, KdV, boundary layers, stability, zero-dispersion limit}
 
\begin{document}

\begin{abstract}
We study the zero-dispersion limit for a class of Korteweg-De Vries (KdV)-type initial-boundary value problems on the half-line,
with Dirichlet boundary conditions assigned at \(x=0\). We focus on the outflow regime, where the solution of the limiting scalar conservation law does not attain the boundary condition imposed on the dispersive problem.

We construct a boundary-layer profile, depending on the fast variable, which is uniquely determined, through the
associated stationary third-order boundary-layer equation,
by the mismatch between the boundary conditions, and by the exponential decay at infinity in the fast variable.
Our main result shows that, under suitable
regularity and compatibility assumptions on the data, the dispersive solution is
well approximated by a WKB expansion given by the sum of the smooth solution of the conservation law and the boundary-layer profile. More precisely, we establish the stability of the boundary-layer profile by deriving quantitative estimates for the remainder term, and show that it converges to $0$ in $H^1$ uniformly in time and up to the lifespan of the smooth solution of the conservation law. 

The proof is based on the analysis of a linearized energy functional and does
not rely on complete integrability or inverse scattering techniques.  It applies
to general fluxes and
requires no smallness assumption on the amplitude of the boundary layer.

To the best of our knowledge, this is the first stability result for boundary layers of KdV-type equations on the half-line. 
\end{abstract}
\maketitle

\section{Introduction}
This paper studies the stability of dispersive boundary layers arising in the zero-
dispersion limit of scalar conservation laws posed on the half-line. Boundary layers are 
introduced to resolve the mismatch between the boundary condition imposed on the dispersive 
problem and the trace selected by the limiting conservation law.  
Our main result rigorously justifies the approximation of the dispersive solution by the sum of the hyperbolic solution and the boundary layer, 
with a remainder term that vanishes as the dispersion parameter tends to zero.
Our estimates
hold up to the lifespan of the smooth limiting solution and do not require smallness
assumptions on the data.

We consider the initial-boundary value problem 
\begin{equation}\label{eq:KdV}
\left\{\begin{aligned}
&\d_tu^\eps+\d_x[f(u^\eps)]+\eps^2\d_{xxx}u^\eps=0,\qquad(t, x)\in\R_+\times\R_+\\
&u^\eps|_{t=0}=u_{in},\quad u^{\eps}|_{x=0}=u_b,
\end{aligned}\right.
\end{equation}
where the initial and boundary data satisfy the strong regularity assumptions specified in~\eqref{eq:regdata} and~\eqref{e:nonmipiace} below. We also assume
\begin{equation} \label{eq:c}
   f \in C^4 (\R), \quad  f'(u) \leq - c <0, \quad \text{for every $u \in J$}   \end{equation}
for some constant $c>0$ and open interval $J \subseteq \R$. Throughout the paper, we tacitly assume that the solutions of our equations attain values in $J$ (see also Remark~\ref{r:compact} below). In particular, our analysis applies to the archetypal Korteweg–De Vries (KdV) equation 
$$
\d_tu^\eps+ 6 u^\eps \d_x u^\eps +\eps^2\d_{xxx}u^\eps=0,
$$
provided $f(u)=3u^2$ and we work in regimes where $u$ is negative and bounded away from 0 (see again Remark~\ref{r:compact} below). Note however that the fact that we work with a more general flux function $f$ rules out the possibility of leveraging the rich structure stemming from the complete integrability of the KdV equation.

The present paper is devoted to the analysis of the zero dispersion limit $\eps \to 0^+$ of~\eqref{eq:KdV}. In this regime, and away from the boundary, the initial-boundary value problem~\eqref{eq:KdV} formally reduces to 
\begin{equation}\label{eq:burg}
\left\{
\begin{array}{ll}
\d_tu+\d_x[f(u)]=0,\qquad(t, x)\in\R_+\times\R_+ \\
\left. u \right|_{t=0}= u_{in}. 
\end{array}
\right.
\end{equation}
Note that, owing to~\eqref{eq:c}, the characteristic lines of the scalar conservation law at the first line of~\eqref{eq:burg} have negative slope and hence the values of $u$ at the domain boundary $x=0$ are completely determined by $u_{in}$, see~\cite{BardosLeRouxNedelec}. In particular, in general the function $u$ does \emph{not} satisfy the boundary condition in~\eqref{eq:KdV}, namely $u(0, \cdot) \neq u_b$. An analogous situation occurs in the vanishing viscosity (or diffusion) limit, and in that case the by now classical result by Xin~\cite{Xin} ensures that the transient behavior near the domain boundary 
due to the mismatch between the boundary values attained by the solutions of the viscous and the inviscid conservation law
is accurately described  by the so-called \emph{viscous boundary layers}. 

From both the analytical and the physical perspectives, the study of the zero-dispersion and zero-viscosity limits differs substantially. In a nutshell, the scope of the present paper is to provide a complete boundary-layer analysis in the zero \emph{dispersion} limit. To the best of our knowledge, this is the first result concerning KdV-type equations. 

Towards this end, we impose strong regularity assumptions on $u_{in}$, which imply that~\eqref{eq:burg} has a regular solution defined on some nontrivial time interval $[0, T_0]$ with $T_0>0$. More precisely, 
\begin{equation} \label{eq:regularity}
  u \in C^0 ([0, T_0]; H^4(\R_+)).
\end{equation}
We restrict our analysis to a fixed time interval $[0, T_0]$ where $u$ is regular. Note, in particular, that this rules out the unstable behaviors observed in the vanishing dispersion limit when the entropy admissible solution of the scalar conservation law has discontinuities, see for instance~\cite{GravaKlein}. In order to describe the transient behavior close to the boundary, we introduce the \emph{dispersive boundary-layer profile} $V: [0, T_0] \times \R_+ \to \R$ satisfying  
\begin{equation}\label{eq:bl_eq}
\left\{\begin{aligned}
&\d_y\left[f(u^0+V)\right]+\d_{yyy}V=0\\
&V(t, 0)=u_b(t)-u^0(t) \\
&\lim_{y \to + \infty} V(t, y)= 0,\,\textrm{for every $t$,}
\end{aligned}\right. 
\end{equation}
where 
\begin{equation}\label{e:uzero}
    u^0(t): = u(t, 0).
\end{equation} 
Equation~\eqref{eq:bl_eq} is obtained as the leading order term of the expansion of~\eqref{eq:KdV} with respect to the fast variable $y=x/\eps$. As it turns out, the boundary value problem~\eqref{eq:bl_eq} has a unique solution $V \in C^3 (\R_+ \times \R)$ which decays exponentially fast to $0$ as $y\to + \infty$, see the analysis in \S\ref{s:bl}. This implies, in particular, that, for every fixed $t \in [0, T_0]$, the rescaled function $u^0(t)+ V (t, \cdot/\eps)$ provides a steady solution of the equation at the first line of~\eqref{eq:KdV} and describes the sharp transition, which becomes steeper as $\eps \to 0^+$, between the boundary value attained by $u^\eps$ and by $u$. To complete the problem setup, we introduce the so-called \emph{WKB expansion} 
\begin{equation}\label{eq:exp}
u^\eps(t, x)=u(t, x)+V\left(t, \frac{x}{\eps}\right)+w^\eps(t, x).
\end{equation}
In the jargon of boundary-layer analysis, proving the \emph{boundary-layer stability} amounts to establishing the validity of the above expansion, that is to prove that the remainder term $w^\eps$ vanishes in the $\eps \to 0^+$ limit, in a suitable topology.  

We are now in a position to state our main result, which establishes the well-posedness of the initial-boundary value problem~\eqref{eq:KdV}, and the boundary-layer stability. 
\begin{theorem}
\label{t:main}
Assume~\eqref{eq:c},~\eqref{eq:regularity} and that the initial and boundary data satisfy the following regularity and compatibility conditions:
\begin{equation}\label{eq:regdata}
u_{in} \in H^4 (\R_+), \quad u_b \in H^4 ([0, T_0]) 
\end{equation} 
and
\begin{equation} \label{e:nonmipiace}
     u_b (0) = u_{in}(0), \quad \d_{xxx} u_{in} (0)=0 , \quad  \d_x u_{in}(0)=0, \quad u'_b (0)=0. 
\end{equation}
Then there is $\eps_\ast>0$ such that for every $\eps \in ]0, \eps_\ast]$ the following holds. \\
First, the initial-boundary value problem~\eqref{eq:KdV} has a unique solution $$u^\eps \in C^0([0, T_0]; H^2 (\R_+))~\cap~L^\infty ([0, T_0]; H^3 (\R_+)).$$ Second, we have
\begin{equation} \label{eq:main}
      \int_{\R_+} \left[(w^\eps)^2 + \frac{\eps^2}{2}  (\d_x w^\eps)^2 \right] (t, x)  \ dx  \leq K_\ast \eps^3 , \quad \text{for every $t \in [0, T_0]$,}
\end{equation}
provided $w^\eps$, $u$ and $V$ are given by~\eqref{eq:exp},~\eqref{eq:burg} and~\eqref{eq:bl_eq}, respectively.
In particular, $ w^\eps \to 0$ in $C^0([0, T_0]; H^1(\R_+))$ and henceforth in the uniform norm. The constants $\eps_\ast$ and $K_\ast$ only depend on the following quantities: $T_0, \| u_b - u^0 \|_{C^3 ([0, T_0])}, \| f \|_{C^4}, c$ and  $\| u\|_{C^0(H^4)}$.
\end{theorem}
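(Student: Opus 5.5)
We plan to derive an evolution equation for the remainder $w^\eps$ defined by~\eqref{eq:exp} and close a Gronwall-type estimate for the weighted energy $E(t)=\int_{\R_+}[(w^\eps)^2+\tfrac{\eps^2}{2}(\d_x w^\eps)^2]\,dx$. Existence and uniqueness of $u^\eps$ in the stated class will be obtained together with this estimate. First we solve the equation for $w^\eps$ locally in time, by a standard Galerkin or fixed point argument for the third order IBVP with one boundary condition at $x=0$; this is well posed because the term $\eps^2\d_{xxx}$ on the half-line needs exactly one condition at $x=0$ when $f'<0$. Then the a priori bound~\eqref{eq:main}, together with higher-order energy bounds (possibly $\eps$-dependent), lets us continue the solution up to $T_0$. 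The compatibility conditions~\eqref{e:nonmipiace} ensure that $w^\eps(0,\cdot)$ and $w^\eps(\cdot,0)$ vanish to sufficient order at the corner, so the data of the $w$-problem lie in $H^3$ with zero boundary value.

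\textbf{Step 1: equation for $w^\eps$ and the residual.} Write $U^a(t,x)=u(t,x)+V(t,x/\eps)$. Then $w=u^\eps-U^a$ solves
\[
\d_t w+\d_x[f(U^a+w)-f(U^a)]+\eps^2\d_{xxx}w=-R^\eps, \qquad w|_{x=0}=0,\qquad w|_{t=0}=-V(0,\cdot/\eps)=0,
\]
where the last identity uses $u_b(0)=u_{in}(0)$. The residual is
\[
R^\eps=\d_t V+\d_x[f(u+V)-f(u)-f(u^0+V)]+\eps^2\d_{xxx}u.
\]
Here we used~\eqref{eq:bl_eq} for the $\eps^{-1}$ terms. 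Taylor expanding $u(t,x)=u^0(t)+O(x)=u^0+O(\eps y)$ and using the exponential decay of $V$ and its derivatives from \S\ref{s:bl}, the bracket is $O(\eps y e^{-\alpha y})$ at the level of $\d_y/\eps$, so it contributes $O(e^{-\alpha y})$. The result is $R^\eps=O(e^{-\alpha x/\eps})+O(\eps^2)$, which gives $\|R^\eps\|_{L^2}\lesssim \eps^{1/2}$. Since this alone only yields $E\lesssim\eps$, we need a better bound. We will therefore refine the ansatz: add a first-order corrector $\eps V_1(t,x/\eps)$, solving the linearized layer equation with source $-\d_tV-\d_y[(f'(u^0+V)-f'(u^0))\,y\,\d_xu(t,0)]$ and zero data, or else exploit the structure of $R$ through duality. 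Either route should give residual contributions of size $\eps^{3/2}$ in $L^2$, or an effective $\int R w$ bound of order $\eps^{3/2}E^{1/2}$, which is consistent with $K_\ast\eps^3$.

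\textbf{Step 2: linearized energy functional.} This is the main obstacle. Multiplying by $w$ gives the boundary term $\eps^2[\tfrac12(\d_xw)^2]_{x=0}$, whose sign is favorable, and the term $-\int \d_x U^a\, f'(U^a)\ldots$. However, $\d_x U^a$ contains $\eps^{-1}\d_yV$, which is large, and it has no sign since the amplitude of $V$ is not small. We therefore cannot use the plain $L^2$ energy. Instead we use the functional adapted to the linearized stationary operator,
\[
\mathcal E(t)=\int_{\R_+}\Big[\tfrac{\eps^2}{2}(\d_xw)^2-\tfrac12\big(f'(U^a)\big)w^2\,\cdot\,\text{(weight)}\Big]dx .
\]
This comes from multiplying the equation by $-\eps^2\d_{xx}w-(f(U^a+w)-f(U^a))$, i.e.\ the conserved Hamiltonian structure of KdV linearized around the profile. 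The key identity is that, along the exact stationary layer, the large $\eps^{-1}$ commutator terms cancel, because $\d_y[f(u^0+V)]+\d_{yyy}V=0$. What remains is only terms with $\d_tV$, $\d_xu$ (bounded), and cubic terms in $w$. Since $-f'\geq c>0$ on $J$, the functional is coercive, $\mathcal E\simeq E$. To treat the boundary terms at $x=0$ from $\d_{xx}w$ we use $w|_{x=0}=0$ and $\d_tw|_{x=0}=0$. The trace term $\eps^2(\d_{xx}w)^2|_{x=0}$ (or similar) must appear with a good sign, and checking this sign is the delicate point. If it fails, we would combine the two multipliers with a suitable constant, exploiting $f'<0$ for the outflow sign.

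\textbf{Step 3: closing.} The nonlinear terms are controlled by $\|w\|_{L^\infty}\lesssim E^{1/2}\eps^{-1/2}$, which under a bootstrap $E\le K\eps^3$ is $O(\eps)$. We thus get $\mathcal E'\le C\mathcal E+C\eps^3$, and Gronwall with $\mathcal E(0)=0$ yields~\eqref{eq:main} for $\eps\le\eps_\ast$, closing the bootstrap. Finally, the bound $\|w\|_{H^1}^2\lesssim E/\eps^2\lesssim\eps$ gives convergence in $C^0(H^1)$.
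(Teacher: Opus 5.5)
Your energy functional is the right one, and your multiplier is essentially the paper's. Write the equation as $\d_t w+\d_x\Phi=-R$ with $\Phi:=\eps^2\d_{xx}w+f(U^a+w)-f(U^a)$, and set $\mathcal E:=\int_{\R_+}[\frac{\eps^2}{2}(\d_xw)^2-(F(U^a+w)-F(U^a)-f(U^a)w)]\,dx$ with $F'=f$. No weight is needed, and $\mathcal E$ is coercive because $f'\le -c$. Multiplying by $-\Phi$ gives
\[
\frac{d}{dt}\mathcal E+\frac12\Phi(t,0)^2=\int_{\R_+}R\,\Phi\,dx-\int_{\R_+}\d_tU^a\,\big[f(U^a+w)-f(U^a)-f'(U^a)w\big]\,dx .
\]
So the boundary term has the good sign automatically. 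The large $\eps^{-1}\d_yV$ disappears not because of the profile equation, but because only $\d_tU^a=O(1)$ ever appears.

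The genuine gap is the residual, which is the heart of the argument and which you leave at ``either route should give''. In your identity $R$ is paired with $\Phi$, hence with $\eps^2\d_{xx}w$, which $\mathcal E$ does not control. Integrating by parts produces the trace $\eps^2R(t,0)\,\d_xw(t,0)$, where $R(t,0)$ contains $\d_tV(t,0)=u_b'-(u^0)'=O(1)$. Neither $\mathcal E$ nor $\frac12\Phi(t,0)^2=\frac{\eps^4}{2}(\d_{xx}w(t,0))^2$ controls $\d_xw(t,0)$. A corrector $\eps V_1$ shrinks $\|R\|_{L^2}$ but does not remove this pairing. Hence your Step 3 inequality $\mathcal E'\le C\mathcal E+C\eps^3$ is not justified.

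The missing idea is to integrate the equation in $x$. With $W:=\int_0^x w$ one has $\Phi=\Phi(t,0)-\d_tW-\int_0^xR$, and substituting into $\int R\,\Phi$ turns your identity into
\[
\frac{d}{dt}\mathcal E+\frac12\Big(\Phi(t,0)-\int_0^\infty R\,dx\Big)^2=-\int_{\R_+}R\,\d_tW\,dx+O\big(\|w\|_{L^2}^2\big),
\]
which is what the paper obtains by using $\d_tW$ directly as the multiplier. The residual is now paired with a time derivative, so one integrates by parts in $t$ (using $w|_{t=0}=0$). The layer terms are then tested against $W$, and $|W(t,x)|\le\sqrt{x}\,\|w(t,\cdot)\|_{L^2}$ applies. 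By the decay of $\d_tV$ and $\d_{tt}V$ (Lemma~\ref{l:decayV0t}), this gives $\int\sqrt{x}\,|\d_t^kV(t,x/\eps)|\,dx\lesssim\eps^{3/2}$.

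For the flux mismatch $H=f(u+V)-f(u)-f(u^0+V)+f(u^0)$, the bound $|u-u^0|\le Cx$ gives $|H|\lesssim x|V(t,x/\eps)|$, so $\|H\|_{L^2}+\|\d_tH\|_{L^2}\lesssim\eps^{3/2}$. The term $\int\d_tW\,\d_xH=-\int\d_tw\,H$ is again integrated by parts in time. The term $\eps^2\d_{xxx}u$ costs $\eps\|w\|_{L^2}^2+C\eps^3$. Every residual contribution is then $O(\eps^{3/2}\|w\|_{L^2})$ or absorbable, which yields $K_\ast\eps^3$ with no corrector.

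Your existence part is also only gestured at. The paper regularizes with $-\nu\d_{txx}w$ and obtains $\nu$-uniform bounds by rerunning the same $\d_t$-antiderivative energy argument on the $t$-differentiated equation. That is where \eqref{e:nonmipiace} actually enters, through $\d_tV(0,\cdot)=0$ and $\d_{xxx}u_{in}(0)=0$.
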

The most relevant result in the above theorem is the boundary-layer stability, namely estimate~\eqref{eq:main}. Remarkably, we establish stability up to the chosen existence time $T_0$ of the smooth solution of the conservation law~\eqref{eq:burg}, and do not require any smallness assumption on the initial and boundary data. We point out that, in the viscous framework, the stability result in~\cite{Xin} holds: i) for \emph{weak} (that is, small amplitude) boundary layers and on a time interval $[0, \hat T]$ for some $\hat T$ in general smaller than $T_0$; ii) for general boundary layers and up to the existence time $T_0$ of the smooth solution $u$, but under a further convexity assumption on $f$. 

Concerning the existence and uniqueness result for~\eqref{eq:KdV}, we mention in passing that our proof actually establishes uniqueness in the slightly wider regularity class defined by~\eqref{e:regularity}. Note furthermore that the global well-posedness of the initial-boundary value problem for the Korteweg–De Vries (KdV) equation was investigated in previous works, most notably~\cite{BonaWinther,BonaSunZhang,BonaSunZhang2,BonaSunZhang3,Bubnov,CollianderKenig, Faminskii,FokasHimonasM,Holmer}. In particular, the analysis in~\cite{BonaSunZhang} relies on the refined and powerful techniques developed in the context of nonlinear dispersive wave equations. Although it is likely that the results in~\cite{BonaSunZhang} can be extended to deal with more general nonlinearities, we decided to provide here a different existence and uniqueness proof, which relies, for the existence part, on the same parabolic approximation as in~\cite{BonaWinther}. Our approach is more restrictive compared to the one in~\cite{BonaSunZhang}, as we only establish well-posedness up to the lifespan of the smooth solution of~\eqref{eq:burg}. The reason why we decided to provide a detailed proof is that our approach appears much simpler and more elementary than the one that could follow from~\cite{BonaSunZhang}. 

Concerning the references to the existing literature, we refer to the classical volume by Dafermos~\cite{Dafermos} for a comprehensive introduction to conservation laws. The analysis of \emph{viscous} boundary layers has received enormous attention in the last thirty or more years, and many authors have investigated stability (and instability) phenomena when the vanishing viscosity limit is a system of conservation laws in one or several space dimensions. Rather than providing a, necessarily incomplete, list of references, we refer to the books by Serre~\cite{SerreI,SerreII} and to the overview by Grenier, Guo and Nguyen~\cite{GrenierGuoNguyen} for a more comprehensive introduction. By contrast, to the best of our knowledge, far fewer contributions have focused on the analysis of \emph{dispersive} boundary layers; among them,~\cite{ChironRousset,GuiZhang} establish boundary-layer stability results for the nonlinear Schr\"odinger equation. See also~\cite{BreschLannesMetivier,LinaresPilodSaut} for other related problems. Finally, we mention that the analysis of the zero dispersion limit of the KdV equation, defined on the whole real line, is another topic that has received wide attention. Since this topic is only tangentially related to the present work, here we only mention the milestone works~\cite{LaxLevermore}, which heavily rely on complete integrability tools to characterize the limit. 

To conclude the discussion on Theorem~\ref{t:main}, we point out that a remarkable difference with the viscous case is that the analysis in~\cite{Xin} straightforwardly extends to the case where $f' \ge c >0$ and $x \in \R_-$ by applying the change of variables $x \mapsto - x$. This is no longer true in the dispersive case, and the case $f' \ge c >0$ and $x \in \R_-$ is fairly different from the one studied here, for two main reasons. First, if $x \in \R_-$, the initial-boundary value problem for the equation at the first line of~\eqref{eq:KdV} is under-determined if we only assign a Dirichlet boundary condition at $x=0$, see for instance~\cite{MR4025574,Holmer}. To obtain a well-posed problem, we can for instance assign both a Dirichlet \emph{and} a Neumann condition at $x=0$.  Second and more importantly, in the linear case $f'=c$ the only steady solution of the equation at the first line of~\eqref{eq:KdV} that decays exponentially fast to $- \infty$ is the trivial one identically equal to $0$. The analysis of the case $f' \ge c >0$ and $x \in \R_-$ is therefore nontrivial and calls for further investigation. 
\begin{remark}
Our main estimate~\eqref{eq:main} implies that the $H^1$ norm of $w^\varepsilon$ decays to $0$ in the vanishing $\varepsilon$ limit, with order at least $\sqrt{\varepsilon}$. One could obtain the decay in Sobolev spaces of higher regularity, and/or an improved convergence rate, by refining the WKB approximation~\eqref{eq:exp} and introducing a higher-order expansion, as is customary in the literature, see for instance~\cite{GreGu} and the references therein.
\end{remark}

\subsection*{Paper outline} The exposition is organized as follows. In \S \ref{s:bl} we discuss some preliminary results on the boundary layers defined by~\eqref{eq:bl_eq}. In \S\ref{s:main} we establish the main result of the present paper, namely the stability estimate~\eqref{eq:main}. In \S\ref{s:wp} we prove the well-posedness of~\eqref{eq:KdV}. Finally, in the Appendix we collect the proof of some auxiliary results. For the reader's convenience, we conclude the introduction by collecting the main notation used in the paper.

\begin{remark} \label{r:compact}
    In the following, we will assume that  $f \in C^4_b (\R)$, namely that $f$ is $4$-times continuously differentiable and that its derivatives up to the order $4$ are bounded in the uniform norm. The boundedness assumption is actually redundant. Indeed, our analysis shows, in particular, that the solutions $u^\eps$ of~\eqref{eq:KdV} are bounded in $L^\infty$, uniformly in $\eps$. In other words, $u^\eps$ always attains values in some compact set where, owing to the $f \in C^4$ assumptions, $f$ and its derivatives up to the $4$-th order are bounded. 
    To see that $u^\eps$ is uniformly bounded, we combine~\eqref{eq:main} with the Sobolev-Gagliardo-Nirenberg Inequality to get 
    \begin{equation*}
       \| w^\eps (t, \cdot) \|_{L^\infty} \leq  
       \sqrt{2}  \| w^\eps (t, \cdot) \|^{1/2}_{L^2 } \| \d_x w^\eps (t, \cdot) \|^{1/2}_{L^2 } \stackrel{\eqref{eq:main}}{\leq} \sqrt{2 K_\ast} \eps, 
       \quad \text{for every $t \in [0, T_0]$.}
    \end{equation*}
    Next, we establish a uniform $L^\infty$ bound on $V$ (see~\eqref{e:decayV}) and recalling~\eqref{eq:exp} and~\eqref{eq:regularity} we conclude that $u^\eps$ is bounded uniformly in $\eps$ in the $L^\infty$ norm. 
\end{remark}
\subsection*{Notation}
We denote by $C(a_1, \dots, a_n)$  a constant only depending on the quantities $a_1, \dots, a_n$. Its precise value can vary from occurrence to occurrence.  We also set 
\begin{equation} \label{e:C}
    \hat C : = C( T_0, \| u_b - u^0 \|_{C^3}, \| f \|_{C^4}, c, \| u\|_{C^0(H^4)}),
\end{equation}
where we have used Remark~\ref{r:compact}.  
\subsubsection*{Main mathematical symbols}
\begin{itemize}
\item $\R_+: = ]0, + \infty[$;
\item $L^2$ and $L^\infty$: the Lebesgue spaces of square integrable and essentially bounded functions, respectively. We denote by $\| \cdot \|_{L^2}$ and $\| \cdot\|_{L^\infty}$ the corresponding norms. If $w$ is a measurable function depending on two variables $(t, x)$, we denote by  $\| w(t, \cdot)\|_{L^2}$ the $L^2$ norm of the function obtained by keeping $t$ fixed and integrating only with respect to $x$. 
\item $H^k$: the Sobolev space $W^{ k,2}$, $k \in \mathbb N$. 
\item $H^{-1}$: the dual space of $H^1_0$, and by $\| \cdot\|_{H^{-1}}$ its norm.   
\end{itemize}

\subsubsection*{Symbols introduced in the present paper}
\begin{itemize}
\item $u^\eps$: the solution of the dispersive initial-boundary problem, see~\eqref{eq:KdV};
\item $u_{in}, u_b$: the initial and boundary data attained by $u^\eps$, see again~\eqref{eq:KdV};
\item $c$: the bound from above on $f'$, see~\eqref{eq:c};
\item $u$: the solution of the scalar conservation law, see~\eqref{eq:burg};
\item $T_0$: a fixed existence time of a smooth solution of the scalar conservation law, see~\eqref{eq:regularity};
\item $u^0:= \left. u \right|_{x=0}$, {see~\eqref{e:uzero}};
\item $V$: the boundary-layer profile, see~\eqref{eq:bl_eq};
\item $w^\eps$: the remainder term defined by~\eqref{eq:exp};
\item $u^\eps_a$: the approximate solution $u + V$, see~\eqref{eq:u_app}; 
\item $g$: the coefficient in the second order expansion of $f$, see~\eqref{eq:g};
\item $\mathcal E^b, \mathcal E^{inn}$: the error terms defined by~\eqref{eq:ein} and~\eqref{eq:eb}, respectively; 
\item $W^\eps:$ the anti-derivative of $w^\eps$, see~\eqref{eq:W}; 
\item $w^{\eps \nu}$: the approximate solution satisfying~\eqref{eq:w_nu};
\item $Z^{\eps \nu}:$ the anti-derivative of $\d_t w^{\eps \nu}$, see~\eqref{e:Zeta}. 
\end{itemize}

\section{Preliminary results on the boundary-layer equation}
\label{s:bl}
In this section we establish some a priori estimates on the solution of the boundary-layer system~\eqref{eq:bl_eq} that we need in the following.

To study the well-posedness of~\eqref{eq:bl_eq} we fix two constants $\bar u^0$ and $\bar V$ and introduce the boundary value problem 
\begin{equation}\label{eq:bl_eq2}
\left\{\begin{aligned}
&\d_y\left[f(\bar u^0+V)-f(\bar u^0)\right]+\d_{yyy}V=0\\
&V(y=0) = \bar V, \quad \lim_{y \to + \infty} V(y)= 0,
\end{aligned}\right. 
\end{equation}
which is basically obtained from~\eqref{eq:bl_eq} by freezing the $t$ variable. 
\begin{lemma}
\label{l:bl}
Assume~\eqref{eq:c}. For every $\bar u^0, \bar V \in \R$, the boundary value problem~\eqref{eq:bl_eq2} has a unique solution $V \in C^3 \cap H^1 (\R_+)$. Also, 
    \begin{equation}\label{e:decay}
        |V (y)| 
          \leq |\bar V| \exp \left( - \sqrt{c} y\right).
    \end{equation}      
\end{lemma}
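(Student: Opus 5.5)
The plan is to reduce \eqref{eq:bl_eq2} to a first-order autonomous ODE through two successive integrations, and then read off existence, uniqueness and the decay \eqref{e:decay} from a phase-plane argument. Throughout, $\bar u^0+V$ is assumed to take values in $J$, as the paper does tacitly, so that $f'\le -c$ along the solution. Set
$$
h(V):=f(\bar u^0+V)-f(\bar u^0)=V\int_0^1 f'(\bar u^0+sV)\,ds ,
\qquad F(V):=-\int_0^V h(s)\,ds .
$$
By \eqref{eq:c} we have $h(V)V\le -cV^2$, hence $F(V)\ge \tfrac c2 V^2$. Also, since $f'$ is bounded (Remark~\ref{r:compact}), $F(V)\le CV^2$.

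\emph{Step 1 (reduction for an arbitrary solution).} Let $V\in C^3\cap H^1(\R_+)$ solve \eqref{eq:bl_eq2}. Since $V\to0$, $V$ is bounded. Integrating the equation once gives $V''+h(V)=k$ for some constant $k$. Then $V''$ is bounded, so $V'$ is uniformly continuous; together with $V'\in L^2$ this forces $V'(y)\to0$ as $y\to+\infty$. Next, $V''(y)\to k$ because $h(V(y))\to0$. If $k\neq0$, then $V'$ could not tend to $0$, so $k=0$. Multiplying $V''=-h(V)$ by $V'$ and letting $y\to\infty$ yields the first integral
$$
\tfrac12 (V')^2=F(V).
$$

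\emph{Step 2 (sign and branch selection, giving uniqueness).} If $\bar V=0$, then $V\equiv 0$. Otherwise:
\begin{itemize}
\item $V$ never vanishes. At a zero $y_0$ the first integral gives $V'(y_0)=0$, and uniqueness for the Lipschitz second-order ODE $V''=-h(V)$ then gives $V\equiv0$, contradicting $V(0)=\bar V$.
\item $V'$ never vanishes. Since $V\neq 0$ we have $F(V)>0$, so $V'\neq0$.
\end{itemize}
Thus $V$ keeps the sign of $\bar V$ and is strictly monotone. Because $V\to0$, $V$ must move toward $0$, so
$$
V'=-\operatorname{sgn}(\bar V)\sqrt{2F(V)} .
$$
On the region $\{\operatorname{sgn}V=\operatorname{sgn}\bar V\}$ the right-hand side is locally Lipschitz, since $F>0$ and $F\in C^1$ there. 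Hence the solution is uniquely determined by $V(0)=\bar V$.

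\emph{Step 3 (existence).} Conversely, take the solution of the first-order ODE in Step 2 with datum $\bar V$.
\begin{itemize}
\item Global existence: $|V|$ is decreasing, so $V$ stays bounded. The bound $\sqrt{2F(V)}\le C|V|$ gives $|V|\ge|\bar V|e^{-Cy}$, so $V$ cannot reach $0$ in finite time. The solution is therefore global on $\R_+$ and keeps the sign of $\bar V$.
\item It solves \eqref{eq:bl_eq2}: differentiating $V'=\mp\sqrt{2F(V)}$ gives $V''=F'(V)=-h(V)$, which is the once-integrated equation, and one more differentiation gives the original third-order equation. Since $h\in C^3$, we get $V\in C^3$.
\end{itemize}

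\emph{Step 4 (decay and regularity).} Using $\sqrt{2F(V)}\ge\sqrt c\,|V|$ in the ODE gives $\frac{d}{dy}|V|\le-\sqrt c\,|V|$, and Gronwall yields \eqref{e:decay}. Then $V\in L^2$, and $|V'|\le C|V|$ gives $V'\in L^2$, so $V\in H^1$ and the boundary condition at infinity holds.

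The main obstacle is Step 1, namely showing that the integration constant $k$ vanishes and that $V'\to0$ using only the $C^3\cap H^1$ information. This is exactly where the $H^1$ requirement enters; without it, nondecaying-derivative solutions are not excluded a priori. The same justification is needed for the zero-crossing exclusion in Step 2, which relies on uniqueness for the second-order Cauchy problem.
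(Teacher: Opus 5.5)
Your proof is correct. The existence and decay half follows the paper's Appendix A: the same two integrations, the same first integral (your $F$ is minus the paper's), the same first-order ODE $V'=-\mathrm{sgn}(\bar V)\sqrt{2F(V)}$, and the same comparison $\sqrt{2F(V)}\ge\sqrt{c}\,|V|$ giving \eqref{e:decay}. There is one small difference there. The paper solves that ODE on all of $\R$, using that $\sqrt{-F}$ is locally Lipschitz across $V=0$, so the equilibrium $0$ cannot be reached. You instead work on the open half-line where $\sqrt{2F}$ is $C^1$ and rule out reaching $0$ through the lower bound $|V|\ge|\bar V|e^{-Cy}$.

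The real difference is uniqueness. The paper takes two solutions and writes the once-integrated equation for $Z=V_{01}-V_{02}$. Multiplying by $Z$ and integrating by parts (this is where the $H^1$ class is convenient) gives $\int(\d_y Z)^2+\int(-f')Z^2=0$ with $-f'\ge c$. That is a coercivity argument that never looks at the shape of individual solutions, and it is the same mechanism behind the $H^1_0$-uniqueness and maximum-principle arguments in Lemmas~\ref{l:ellittico}--\ref{l:decayV0t}. You instead show that every solution has constant sign, is strictly monotone and lies on the decaying branch of the first integral, and then apply Picard--Lindel\"of where that branch is Lipschitz. Your route relies essentially on the autonomous scalar structure. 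In return it describes every solution completely and obtains existence and uniqueness from one and the same ODE.

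One correction to your closing remark: the $H^1$ hypothesis is not where the difficulty lies in Step 1. The condition $V\to0$ alone forces both integration constants to vanish. If $V''\to k\neq0$, or if $(V')^2\to 2k_2>0$, then $V$ would be unbounded. This is what the paper uses implicitly when it sets $k_1=k_2=0$. Your argument via uniform continuity of $V'$ and $V'\in L^2$ is valid but unnecessary.
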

With Lemma~\ref{l:bl} in place, we establish the well-posedness of~\eqref{eq:bl_eq} applying, for any given $t \in [0, T_0]$, Lemma~\ref{l:bl} with $\bar V: = u_b(t) - u^0(t)$ and $\bar u^0= u^0 (t)$. Note that~\eqref{e:decay} yields 
\begin{equation} \label{e:decayV}
 |V (t, y)| \leq \| u_b - u^0 \|_{C^0}
 \exp \left( - \sqrt{c} y\right)
\end{equation}
and that, owing to the first condition in~\eqref{e:nonmipiace}, this yields 
\begin{equation}
    \label{e:V000}
    V (0, y) =0, \quad \text{for every $y$.}
\end{equation}
The proof of Lemma~\ref{l:bl} relies on fairly standard arguments, and for completeness we provide it in the Appendix. 

In the following, we need to establish exponential decay properties for the derivatives $\d_t V$, $\d_{tt} V$ and $\d_{ttt} V$. Towards this end, we establish a preliminary elementary lemma. 
\begin{lemma}\label{l:ellittico}
Assume~\eqref{eq:c},\eqref{eq:regularity} and~\eqref{eq:regdata}. Let $V$ be the solution of~\eqref{eq:bl_eq} and $\sigma: \R_+ \to \R$ be a summable function satisfying 
\begin{equation}
    \label{e:sigma}
     |\sigma (y) | \leq D \exp[- d  y] 
\end{equation}
for some $d, D >0$. Assume that $Z: \R_+ \to \R$ is a twice continuously differentiable function satisfying  
\begin{equation}\label{e:ode}
     f'(u^0 + V) Z +  Z'' + \sigma =0, \quad \lim_{y \to + \infty} Z(y)=0.
\end{equation}
Then 
\begin{equation}
\label{e:wanteddecay}
  |Z(y)| \leq  C  (c,  D, Z(0) ) \exp(- 
   \min \{d, \sqrt{c}/2 \} y).  
\end{equation}
\end{lemma}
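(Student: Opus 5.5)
The plan is a comparison (maximum principle) argument with an exponential barrier. Set $b(y) := -f'(u^0 + V(y))$, so that by~\eqref{eq:c} we have $b(y) \geq c > 0$ for every $y$. Here we use that $u^0+V$ takes values in $J$, as tacitly assumed throughout. Also, $b$ is continuous since $V \in C^3$. Then~\eqref{e:ode} becomes
\begin{equation*}
Z'' - b Z = -\sigma, \qquad \lim_{y\to+\infty} Z(y) = 0 .
\end{equation*}
Denote by $L z := z'' - b z$ the associated operator.

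Fix $k := \min\{d, \sqrt{c}/2\}$ and consider the barrier $\varphi(y) := A e^{-k y}$, with $A>0$ to be chosen. Since $k^2 \leq c/4$ and $b \geq c$,
\begin{equation*}
L\varphi = (k^2 - b)\varphi \leq -\tfrac{3c}{4} A e^{-ky}.
\end{equation*}
Moreover, since $k \leq d$, assumption~\eqref{e:sigma} gives $|\sigma(y)| \leq D e^{-dy} \leq D e^{-ky}$. Now choose
\begin{equation*}
A := \max\{ |Z(0)|, \, 4D/(3c) \}.
\end{equation*}
Then the functions $\psi_\pm := \varphi \mp Z$ satisfy
\begin{equation*}
L\psi_\pm = L\varphi \pm \sigma \leq -\tfrac{3c}{4} A e^{-ky} + D e^{-ky} \leq 0 .
\end{equation*}
They also satisfy $\psi_\pm(0) = A \mp Z(0) \geq 0$ and $\psi_\pm(y) \to 0$ as $y \to +\infty$, because both $\varphi$ and $Z$ vanish at infinity.

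The key step is the maximum principle showing $\psi_\pm \geq 0$ on $\R_+$. Suppose instead that $\psi_\pm$ takes a negative value somewhere. Since $\psi_\pm$ is continuous, nonnegative at $0$ and tends to $0$ at infinity, it attains a negative minimum at some interior point $y_0 > 0$. At $y_0$ we have $\psi_\pm''(y_0) \geq 0$, and hence $L\psi_\pm(y_0) = \psi_\pm''(y_0) - b(y_0)\psi_\pm(y_0) > 0$, using $b(y_0) \geq c > 0$. This contradicts $L\psi_\pm \leq 0$. Therefore $|Z(y)| \leq A e^{-ky}$ for all $y$, which is~\eqref{e:wanteddecay} with $C = \max\{|Z(0)|, 4D/(3c)\}$. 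This constant depends only on $c$, $D$ and $Z(0)$, as required.

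I do not expect any genuine obstacle. The only points to check carefully are the following.
\begin{itemize}
\item The coefficient $b$ is continuous and bounded below by $c$ on all of $\R_+$. This is exactly where the tacit assumption that $u^0 + V \in J$ enters.
\item The decay condition $Z \to 0$ is used to guarantee that a negative infimum is attained at an interior point rather than approached at infinity.
\end{itemize}
The choice $k \leq \sqrt{c}/2$, rather than $k$ up to $\sqrt{c}$, is what leaves the strict margin $3c/4$ needed to absorb the source term $\sigma$.
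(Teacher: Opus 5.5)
Your proof is correct and follows essentially the same route as the paper: an exponential barrier compared with $Z$ through a maximum/minimum principle at an interior extremum, where $-f'(u^0+V)\ge c$, $k^2\le c/4$ and $k\le d$ together absorb $\sigma$. The only difference is cosmetic. You take the exponent exactly equal to $\min\{d,\sqrt{c}/2\}$ and set $A=\max\{|Z(0)|,4D/(3c)\}$ with non-strict choices, whereas the paper uses strict inequalities; your version therefore delivers precisely the rate stated in \eqref{e:wanteddecay}.
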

\begin{proof}
The proof is a simple application of the maximum principle, which we detail for the sake of completeness.
We set $U: = Z- B e^{-by}$, where $B$ and $b$ are suitable constants satisfying 
\begin{equation}
\label{e:B}
    B > \max \left\{ |Z(0)|, \frac{4D}{3c}\right\}, \quad 
    0 < b < \min \left\{ d^2, \sqrt{c}/2  \right\}.
\end{equation}
Note that by combining the above inequalities we get $b^2 < c - D/B$. Using~\eqref{e:ode}, we compute 
\begin{equation} \label{e:mpU}\begin{split}
          & f'(u_0+V) U + U''   = \underbrace{f'(u_0+V) Z + Z''}_{= - \sigma} + [- f' (u_0+V) - b^2]Be^{-by} \\
          & \stackrel{\eqref{e:sigma}}{\ge}
          - D e^{-dy} + [- f' (u_0+V) - b^2 ]Be^{-by}
          \stackrel{b \leq d}{\ge}
          \left[- \frac{D}{B} - f' (u_0+V) - b^2 \right]Be^{-by}
          \\
          & \stackrel{\eqref{eq:c}}{\ge}
          \left[- \frac{D}{B} +c - b^2 \right]Be^{-by} 
          \stackrel{\eqref{e:B}}{>}0
               \end{split}
\end{equation}
Assume by contradiction that $U(y)$ attains strictly positive values for some $y \in \R_+$. Since $U(0)<0$ and  $\lim_{y \to + \infty} U(y)=0$, there is $y^\ast \in \R_+$ such that $U(y^\ast)= \max_{y \in \R_+} U(y)>0$. By using~\eqref{e:mpU} we obtain $U''(y^\ast) > f'(u_0+V) U(y^\ast) >0$, which contradicts the maximality of $U(y^\ast)$. This yields $Z (y)\leq B e^{-b y}$. The proof of the inequality $Z (y)\ge- B e^{-b y}$ is entirely analogous and is therefore omitted. \end{proof}
\begin{lemma} \label{l:decayV0t}
Assume~\eqref{eq:c}, \eqref{eq:regularity} and~\eqref{eq:regdata}. Let $V$ be the solution of~\eqref{eq:bl_eq}, then  
\begin{equation}
    \label{e:decayV0t}
        |\d_t V (t, y)| \leq \hat C 
 \exp \left(  - \frac{\sqrt{c}}{2}\ y\right)
\end{equation}
and 
\begin{equation}
    \label{e:Vt0a0}
     \d_t V(0, y) = 0 \quad \text{for every $y$}.
\end{equation}
We also have 
\begin{equation} \label{e:decayV0tt}
     |\d_{tt} V (t, y)| \leq \hat C 
 \exp \left( - \frac{\sqrt{c}}{2}y\ \right) 
\end{equation}
and 
\begin{equation} \label{e:decayV0ttt}
     |\d_{ttt} V (t, y)| \leq \hat C 
 \exp \left(  - \frac{\sqrt{c}}{2}\ y\right), 
\end{equation}
\end{lemma}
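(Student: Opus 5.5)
The plan is to differentiate the boundary-layer equation with respect to $t$, and then apply Lemma~\ref{l:ellittico} to $\d_t V$, $\d_{tt} V$ and $\d_{ttt} V$ in turn.

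\emph{Reduction to second-order equations.} First integrate the equation at the first line of~\eqref{eq:bl_eq} in $y$ from $y$ to $+\infty$. Since $V$ and its $y$-derivatives decay (by Lemma~\ref{l:bl} and the ODE structure), this gives
\[
f(u^0+V)-f(u^0)+\d_{yy}V=0 .
\]
Differentiating in $t$, which is justified by smooth dependence of the ODE solution on the parameters $u^0(t)$ and $u_b(t)-u^0(t)$, yields for $Z=\d_t V$
\[
f'(u^0+V)Z+\d_{yy}Z+\sigma_1=0,\qquad \sigma_1:=\big[f'(u^0+V)-f'(u^0)\big]\,\dot u^0 .
\]
The boundary value is $Z(0)=u_b'-\dot u^0$. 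By the mean value theorem and~\eqref{e:decayV}, $|\sigma_1|\le \|f\|_{C^2}|\dot u^0|\,|V|\le \hat C e^{-\sqrt c y}$. Here $\dot u^0$ is bounded by~\eqref{eq:regularity}, using the equation $\d_t u=-f'(u)\d_x u$ and Sobolev embedding. Lemma~\ref{l:ellittico} with $d=\sqrt c$ then gives~\eqref{e:decayV0t}.

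\emph{Higher derivatives.} Differentiating once more, $Z_2=\d_{tt}V$ satisfies the same equation with
\[
\sigma_2=f''(u^0+V)(\dot u^0+\d_tV)^2 - f''(u^0)(\dot u^0)^2+\big[f'(u^0+V)-f'(u^0)\big]\ddot u^0 .
\]
Write the first difference as $[f''(u^0+V)-f''(u^0)](\dot u^0)^2 + f''(u^0+V)\,(2\dot u^0\d_tV+(\d_tV)^2)$. Each term is then bounded by $\hat C e^{-\sqrt c y/2}$, using~\eqref{e:decayV}, \eqref{e:decayV0t} and $f\in C^4$. Lemma~\ref{l:ellittico} with $d=\sqrt c/2$ gives~\eqref{e:decayV0tt}; note $\min\{d,\sqrt c/2\}=\sqrt c/2$ keeps the rate. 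The third derivative is handled identically. Now $\sigma_3$ involves $f'''$, $\dddot u^0$, and products containing at least one factor among $V,\d_tV,\d_{tt}V$. This is why $f\in C^4$ is needed, and why $\dddot u^0$ and $u_b'''$ must be bounded. The latter holds because $u_b\in H^4\subset C^3$, and the former because $u\in C^0(H^4)$ allows three time derivatives at $x=0$ via the equation, with $\d_t^3 u$ involving $\d_x^3 u\in H^1\subset C^0$. All constants depend only on the quantities in $\hat C$, since $Z(0)$ is a combination of derivatives of $u_b-u^0$ up to order $3$.

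\emph{The identity~\eqref{e:Vt0a0}.} At $t=0$, $V(0,\cdot)\equiv0$ by~\eqref{e:V000}, so $\sigma_1(0,\cdot)=0$. Also $Z(0)=u_b'(0)-\dot u^0(0)$, where $u_b'(0)=0$ by~\eqref{e:nonmipiace} and $\dot u^0(0)=-f'(u_{in}(0))\d_xu_{in}(0)=0$. So $Z(0,\cdot)$ solves the homogeneous problem with zero data, and Lemma~\ref{l:ellittico} with $D\to0$ and $Z(0)=0$ forces $Z\equiv0$. Alternatively, the maximum-principle argument applies with arbitrarily small $B$.

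\emph{Main obstacle.} The delicate point is justifying the $t$-differentiation: we need $\d_t^kV$ to exist, be $C^2$ in $y$, and tend to $0$ at infinity, so that Lemma~\ref{l:ellittico} applies. I would obtain this from the phase-plane construction in Lemma~\ref{l:bl}: $V$ lies on the stable manifold of the hyperbolic equilibrium $(0,0)$ of $V''=f(u^0)-f(u^0+V)$, which is hyperbolic since $f'\le -c$. The stable manifold depends $C^3$ on the parameter $u^0$ when $f\in C^4$, and the initial point $V(0)$ depends $C^3$ on $t$. Hence $\d_t^kV$ is continuous and decays, by smooth dependence on parameters within a stable manifold.
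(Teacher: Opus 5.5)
Your proposal is correct and follows essentially the same route as the paper: differentiate the once-integrated boundary-layer equation in $t$, and apply Lemma~\ref{l:ellittico} successively to $\d_t V$ (with $d=\sqrt c$) and to $\d_{tt}V$, $\d_{ttt}V$ (with $d=\sqrt c/2$); then use $V(0,\cdot)\equiv 0$ and the compatibility conditions $u_b'(0)=\d_x u_{in}(0)=0$ to obtain~\eqref{e:Vt0a0}. The differences are minor: you get~\eqref{e:Vt0a0} from the maximum principle with vanishing data instead of the paper's $H^1_0$ uniqueness argument, and your stable-manifold justification of the $t$-differentiability and of $\d_t^k V\to 0$ as $y\to+\infty$ is more careful than the paper's brief appeal to continuous dependence on parameters.
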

\begin{proof}
We first establish~\eqref{e:decayV0t}. First, we point out that classical results on the continuous dependence of the solutions of ODEs on parameters imply that $\d_t V$ is a continuously differentiable function. Also, $\lim_{y \to + \infty} \d_t V(t, y) =0$ owing to~\eqref{e:decay}. To gather further information on $\d_t V$, we consider~\eqref{eq:bl_eq} and compute the $t$-derivative. We arrive at 
\begin{equation} \label{e:eqV0t}
   [f'(u^0 + V) - f'(u^0)] [u^0]' + f'(u^0 + V) \d_t V + \d_{yy t} V=0
\end{equation}
and observe that 
$$
  |f'(u^0 + V) - f'(u^0)| \leq \hat C |V| \stackrel{\eqref{e:decayV}}{\leq}
  \hat C \| u_b - u^0 \|_{C^0}
 \exp \left( - \sqrt{c} y\right).
$$
We apply Lemma~\ref{l:ellittico} with $\sigma =  [f'(u^0 + V) - f'(u^0)] [u^0]' $, $d=\sqrt{c}$ and $Z= \d_t V$ and arrive at~\eqref{e:decayV0t}. To establish ~\eqref{e:Vt0a0}, we plug the second condition  in~\eqref{e:nonmipiace} into the equation at the first line of~\eqref{eq:burg} to get $[u^0]'(0)= [f(u_{in})]'(0)=0$, which combined with the last condition in~\eqref{e:nonmipiace} yields $[u^0 - u_b]'(0)=0$. This implies that $\d_t V(0, 0) =0$. Owing to~\eqref{e:V000}, the source term in~\eqref{e:eqV0t} vanishes at $t=0$. By the uniqueness of solutions of the elliptic equation in $H^1_0(\R_+)$, this yields~\eqref{e:Vt0a0}. 

To establish~\eqref{e:decayV0tt}, we compute the $t$-derivative of~\eqref{e:eqV0t} and arrive at 
\begin{equation} \label{e:eqV0tt}
\begin{split}
   [f''& (u^0 + V) - f''(u^0)] \big[[u^0]'\big]^2
   + f''(u^0 + V) \d_t V  +
   [f'(u^0 + V) - f'(u^0)] [u^0]'' \\ &
   + f''(u^0 + V) [u^0]' \d_t V  
     + f''(u^0 + V) [ \d_t V ]^2   + 
      f'(u^0 + V)  \d_{tt} V  + \d_{yy tt} V=0
    \end{split}
\end{equation}
We then apply Lemma~\ref{l:ellittico} with $Z= \d_{tt} V$, $d=\sqrt{c}/2$ and $\sigma$ given by the sum of all but the last two terms in~\eqref{e:eqV0tt}. Note that~\eqref{e:sigma} comes from~\eqref{e:decay} and~\eqref{e:wanteddecay}. To establish~\eqref{e:decayV0ttt}, we evaluate the $t$-derivative of~\eqref{e:eqV0tt} and arrive at a cumbersome expression, which however can be written in the form~\eqref{e:ode} provided $Z= \d_{ttt} V$ and $\sigma$ is the sum of a bunch of terms that satisfy~\eqref{e:sigma} with $d=\sqrt{c}/2$ owing to~\eqref{e:decay},~\eqref{e:decayV0t} and~\eqref{e:decayV0tt}.   
\end{proof}

\section{Proof of the boundary-layer stability}
\label{s:main}
In this section, we assume the well-posedness of  
~\eqref{eq:KdV}, which is proved in the next section, and we analyze the stability of the boundary layers. Namely, we prove that the remainder term $w^\eps$ defined as in~\eqref{eq:exp} satisfies~\eqref{eq:main}. The exposition is organized as follows: in \S\ref{s:weps} we derive the equation satisfied by $w^\eps$, whereas in \S\ref{s:proof} we provide the detailed proof of~\eqref{eq:main}.  
\subsection{The equation satisfied by the remainder term}\label{s:weps} 
For convenience, we define the approximate solution $u^\eps_a$ by setting 
\begin{equation}\label{eq:u_app}
u^\eps_a(t, x)=u(t, x)+V\left(t, \frac{x}{\eps}\right).
\end{equation}
We now plug~\eqref{eq:exp} into the equation at the first line of~\eqref{eq:KdV}, and recall that $u$ and $V$ satisfy~\eqref{eq:burg} and ~\eqref{eq:bl_eq}, respectively.
We obtain
\begin{equation*}\begin{aligned}
0=&\d_t u + \d_tV+\d_tw^\eps+\d_x [f(u^\eps)]+\eps^2\d_{xxx}u+\frac1\eps\d_{yyy}V+\eps^2\d_{xxx}w^\eps\\
\stackrel{\eqref{eq:burg},\eqref{eq:bl_eq}}{=}&\d_tV+\d_tw^\eps+\d_x\left[f(u^\eps)-f(u)-f(u^0+V)+f(u^0)\right]\\
&+\eps^2\d_{xxx}u
+\eps^2\d_{xxx}w^\eps.
\end{aligned}\end{equation*}
We now Taylor-expand 
\begin{equation*}
f(u^\eps)=f(u^\eps_a)+f'(u^\eps_a)w^\eps+g(u^\eps_a, w^\eps)(w^\eps)^2,
\end{equation*}
where
\begin{equation} \label{eq:g}
    g(u^\eps_a, w^\eps): = \int_0^1 \int_0^\xi f''(u^\eps_a+ \eta w^\eps) \, d \eta \ d \xi, 
\end{equation}
and conclude that $w^\eps$ is a solution of the initial-boundary value problem
\begin{equation}\label{eq:w_eps}
\left\{\begin{aligned}
&\d_tw^\eps+\d_x[f'(u^\eps_a)w^\eps+g(u^\eps_a, w^\eps)(w^\eps)^2]+\eps^2\d_{xxx}w^\eps+\mathcal E^{inn}+\mathcal E^{b}=0,\\
&w^\eps|_{t=0}=0=w^\eps|_{x=0}, 
\end{aligned}\right.
\end{equation}
provided 
\begin{equation} \label{eq:ein}
\mathcal E^{inn}: =\eps^2\d_{xxx}u,
\end{equation}
\begin{equation} \label{eq:eb}
\mathcal E^b : =\d_tV+\d_x[f(u^\eps_a)-f(u)]-\d_x[f(u^0+V)-f(u^0)].
\end{equation}
\subsection{Proof of~\eqref{eq:main}} \label{s:proof}
The basic idea underpinning the proof of Theorem~\ref{t:main} is that control on the left-hand side of~\eqref{eq:main}
comes from the analysis of 
\begin{equation} \label{eq:E}
        E (t) : = \int_{\R_+}\frac{\eps^2}{2}(\d_xw^\eps)^2-f'(u^\eps_a)(w^\eps)^2\,dx ,
\end{equation}
which, owing to~\eqref{eq:c}, may be interpreted as a linearized energy around $u^\eps_a$. In this respect, our analysis is indebted to the one in previous works that highlighted the role of similar functionals in obtaining a priori estimates for KdV-type equations, see for instance~\cite{Ginibre,GinibreTsutsumiVelo}. 

To control $E$, we introduce the anti-derivative of $w^\eps$ by setting 
\begin{equation} \label{eq:W}
W^\eps(t, x): =\int_0^xw^\eps(t, x')\,dx' \implies  \partial_x W^\eps = w^\eps, 
\end{equation}
and point out that
\begin{equation}\label{eq:dataz}
     \left. W^\eps \right|_{t=0}=0, \qquad \left. W^\eps \right|_{x=0}=0,
\end{equation}
where the first condition comes from the initial condition in~\eqref{eq:w_eps}. 
Next, we fix a time $T>0$, to be specified in the following, and multiply the equation at the first line of \eqref{eq:w_eps} by $\d_tW^\eps$. We integrate in space and time over the interval $[0, T] \times \R_+$ and arrive at 
\begin{equation}\begin{split} \label{eq:I1I6}
0=& \int_0^T \! \int_{\R_+}\d_tW^\eps\Big[\d_tw^\eps+\d_x[f'(u^\eps_a)w^\eps+g(u^\eps_a, w^\eps)(w^\eps)^2]+\eps^2\d_{xxx}w^\eps  \\
 &  + \mathcal E^{inn}+\mathcal E^{b}\Big]\,dx dt =: I_1+\dotsc + I_6,
\end{split}\end{equation}
where $I_1, \dots, I_6$ are defined below. 
The terms $I_2$ and $I_4$ are the main ones, and correspond (up to higher-order terms) to the time derivative of the linearized energy $E$ defined in~\eqref{eq:E}. The rest of the proof containing all the technical details is organized into the following steps. \\
{\bf Step 1: notation setup and preliminary estimates.} We rely on a continuous induction argument. 
Towards this end, we define $T^\ast$ by setting 
\begin{equation} \label{eq:Tast}
      T^\ast : = \sup \Big\{t \in [0, T_0]: \int_{\R_+} \Big[(w^\eps)^2 + \frac{\eps^2}{2}  (\d_x w^\eps)^2 \Big](\tau, x)  \ dx  \leq \eps^2 \; \text{for every $\tau \in [0, t]$}\Big\}.
\end{equation}
Note that the set for which $T^\ast$ is the supremum is non-empty due to the initial condition $\left. w^\eps \right|_{t=0}=0$ {and the continuity of the map $t \mapsto \| w^\eps(t, \cdot) \|_{H^1}$}. Also, in principle the value of $T^\ast$ might depend on $\eps$. As a matter of fact, we will show that $T^\ast= T_0$ and hence, in particular, its value does not depend on $\eps$. 

The definition of $T^\ast$ combined with the Sobolev-Gagliardo-Nirenberg Inequality yields 
\begin{equation}\label{eq:linftyw}
    \| w^\eps (t, \cdot) \|_{L^\infty_x} \leq  \sqrt{2} \| w^\eps (t, \cdot) \|^{1/2}_{L^2_x} \| \partial_x w^\eps (t, \cdot) \|^{1/2}_{L^2_x} 
    \leq \sqrt{2} \eps^{1/2}   \quad \text{for every $t\in [0, T^\ast]$}. 
\end{equation}
Note furthermore that the H\"older Inequality implies 
\begin{equation} \label{eq:linftyW}
      |W^\eps (t, x)| \leq \sqrt{x} \| w^\eps (t, \cdot) \|_{L^2_x}, 
      \end{equation}
provided $W^\eps$ is the same as in~\eqref{eq:W}. We now fix $T \in [0, T^\ast]$ and control the terms $I_1, \dots, I_6$ satisfying~\eqref{eq:I1I6}. \\
{\bf Step 2: control on $I_1$ and $I_2$}. 
We have 
\begin{equation*}
\begin{split}
      I_1 & : = \int_0^T \! \int_{\R_+}\d_tW^\eps \d_tw^\eps dx dt \stackrel{\eqref{eq:W}}{=}\frac12
       \int_0^T \!  \int_{\R_+}\d_x[(\d_tW^\eps)^2]\,dx dt \\ & =
      -\frac12 \int_0^T \! (\d_tW^\eps(t, 0))^2 dt  \stackrel{\eqref{eq:dataz}}{=}0,
\end{split}
\end{equation*}
and, integrating by parts and using the boundary condition~\eqref{eq:dataz}, 
\begin{equation*}\begin{aligned}
I_2& : = \int_0^T \!\int_{\R_+}\d_tW^\eps\d_x[f'(u^\eps_a)w^\eps]\,dx dt  \stackrel{\eqref{eq:W},\eqref{eq:dataz}}{=}-\int_0^T \! \int_{\R_+}\d_tw^\eps f'(u^\eps_a)w^\eps\,dx\ dt \\
&=-\frac12 \int_0^T \! \int_{\R_+} f'(u^\eps_a)\partial_t [(w^\eps)^2]\,dx dt \\
& =\int_0^T \! \frac{d}{dt}\left[-\frac12\int_{\R_+}f'(u^\eps_a)(w^\eps)^2\,dx\right] dt +
\underbrace{\frac12\int_0^T \! \int_{\R_+}(w^\eps)^2f''(u^\eps_a)(\d_tu+\d_tV)\,dx dt }_{:=I_{22}} \\
& = \frac12\int_{\R_+}-f'(u^\eps_a)(w^\eps)^2 (T, x) \,dx  + I_{22}.
\end{aligned}\end{equation*}
Note furthermore that 
\begin{equation*} \begin{split}
    |I_{22}|  & \leq C ( {\color{black}\| f''\|_{C^0}}, \| \partial_t u \|_{L^\infty}, \| \partial_t V \|_{L^\infty} )\int_0^T \! \! \int_{\R_+} (w^\eps)^2 (t, \cdot) \ dx dt
    \\ & 
    \stackrel{\eqref{eq:regularity},\eqref{e:C},\eqref{e:decayV0t}}{\leq} \hat C
    \int_0^T \! \! \int_{\R_+} (w^\eps)^2 (t, \cdot) \ dx dt.
    \end{split}
\end{equation*}
{\bf Step 3: control on $I_3$}. We have 
\begin{equation*}
\begin{split}
       I_3:  = & \int_0^T \! \int_{\R_+}\d_tW^\eps \d_x [ g(u^\eps_a, w^\eps)(w^\eps)^2]dx \stackrel{\eqref{eq:W},\eqref{eq:dataz}}{=}
      - \int_0^T \! \int_{\R_+}\d_t w^\eps g(u^\eps_a, w^\eps)(w^\eps)^2 dx dt \\ &
      = - \int_0^T \! \frac{d}{dt} \int_{\R_+} G(u^\eps_a, w^\eps) dx dt  + 
       \underbrace{\int_0^T \! \int_{\R_+} \d_1 G(u^\eps_a, w^\eps) \d_t u^\eps_a dx dt}_{:= I_{32}} \\ 
       & \stackrel{w^\eps (0, \cdot)=0}{=} \underbrace{- \int_{\R_+} G(u^\eps_a, w^\eps) (T, x) dx}_{:= I_{31}}  + I_{32}, 
\end{split}
\end{equation*}
provided 
\begin{equation} \label{eq:G}
      G(u^\eps_a, w^\eps) : = \int_0^{w^\eps} g(u^\eps_a, \chi)\chi^2 d \chi \stackrel{\eqref{eq:g}}{=}
     \int_0^{w^\eps} \! \! \chi^2  \int_0^1 \int_0^\xi f''(u^\eps_a+ \eta \chi) \, d \eta \ d \xi \ d \chi,
\end{equation}
the function $g$ is the same as in~\eqref{eq:g}, and $\d_1 G$ denotes the partial derivative of $G$ with respect to the variable $u^\eps_a$,
that is 
\begin{equation*}
    \d_1 G(u^\eps_a, w^\eps)  \stackrel{\eqref{eq:G}}{=}
     \int_0^{w^\eps} \! \!  \chi^2 \int_0^1 \int_0^\xi f'''(u^\eps_a+ \eta \chi)  \, d \eta \ d \xi \ d \chi.
\end{equation*}
Note that 
\begin{equation*}
      |G(u^\eps_a, w^\eps)| \leq \hat C  (w^\eps)^3, \quad 
         |\d_1 G(u^\eps_a, w^\eps)| \leq \hat C (w^\eps)^3,
       \end{equation*}
which yields 
\begin{equation}
    |I_{31}| \leq \hat C \int_{\R_+} (w^\eps)^3 (T, x) dx \stackrel{\eqref{eq:linftyw}}{\leq}
   \hat C \eps^{1/2} \int_{\R_+} (w^\eps)^2 (T, x) dx
\end{equation}
and 
\begin{equation}
    \begin{split}
    |I_{32}| & \leq  \hat C  \| \partial_t u^\eps_a  \|_{L^\infty} 
    \int_0^T \! \! \int_{\R_+} |w^\eps|^3  \ dx dt  \stackrel{\eqref{eq:linftyw}}{\leq} \hat C  \eps^{1/2 } \| \partial_t u^\eps_a  \|_{L^\infty} 
    \int_0^T \! \! \int_{\R_+} [w^\eps]^2  \ dx dt \\
    & 
    \stackrel{\eqref{eq:regularity},\eqref{e:decayV0t}, \eqref{eq:u_app}}{\leq} 
    \hat C  \eps^{1/2 } 
    \int_0^T \! \! \int_{\R_+} [w^\eps]^2   \ dx dt.
    \end{split}
\end{equation}
{\bf Step 4: control on $I_4$ and $I_5$}. We have 
\begin{equation*}
\begin{split}
I_4 & : =\eps^2\int_0^T \int_{\R_+}\d_tW^\eps\d_{xxx}w^\eps\,dx dt 
\stackrel{\eqref{eq:W},\eqref{eq:dataz}}{=} - \eps^2 \int_0^T \int_{\R_+}\d_{t}w^\eps\d_{xx}w^\eps\,dx dt \\ &
  \stackrel{\d_t w^\eps(\cdot, 0)\equiv 0 }{=}
   \eps^2 \int_0^T \int_{\R_+}\d_{tx }w^\eps\d_{x}w^\eps\,dx dt  = 
  \frac{\eps^2}{2} \int_0^T \frac{d}{dt}\int_{\R_+}(\d_xw^\eps)^2\,dx dt  \\
  & \stackrel{w^\eps(0, \cdot) \equiv 0 }{=}  \frac{\eps^2}{2} \int_{\R_+}(\d_xw^\eps)^2 (T, x)\,dx
\end{split}
\end{equation*}
and
\begin{equation*} \begin{split}
        I_5 : = & \int_0^T \int_{\R_+} \d_t W^\eps \mathcal E^{inn} \, dx dt  \stackrel{\eqref{eq:ein}}{=} 
      \eps^2 \int_0^T \int_{\R_+} \d_t W^\eps \d_{xxx} u dx dt  \\
  &     \stackrel{\eqref{eq:W},\eqref{eq:dataz}}{=}   -\eps^2 \int_0^T \int_{\R_+} \d_t w^\eps \d_{xx} u \ dx dt \\
     & = - \eps^2 \int_0^T \frac{d}{dt}
       \int_{\R_+} w^\eps \d_{xx} u \ dx dt +   \underbrace{  \eps^2 \int_0^T \int_{\R_+} w^\eps \d_{t xx} u \, dx dt }_{:= I_{52}}\\
       & \stackrel{w^\eps(0, \cdot) \equiv 0}{=} \underbrace{- \eps^2   \int_{\R_+} w^\eps \d_{xx} u (T, x) \ dx}_{: = I_{51}} +  I_{52}. 
\end{split}
\end{equation*}
Note that, owing to the Young Inequality, we have 
\begin{equation*}
     |I_{51}|  \leq \eps \int_{\R_+} (w^\eps)^2(T, x)  dx  + \hat C \eps^3, \quad 
      |I_{52}|  \leq \eps \int_0^T \int_{\R_+} (w^\eps)^2  dx dt + \hat C \eps^3. 
\end{equation*}
{\bf Step 5: control on $I_6$}.
We have 
\begin{equation*} \begin{split} 
        I_6 : = & \int_0^T \int_{\R_+} \d_t W^\eps \mathcal E^{b} dx dt \stackrel{\eqref{eq:eb}}{=} 
       \underbrace{\int_0^T \int_{\R_+} \d_t W^\eps \d_t V dx dt }_{:= I_{61}}  \\
       & \quad + \underbrace{\int_0^T \int_{\R_+} \d_t W^\eps \d_x[f(u^\eps_a)-f(u)- f(u^0+V)+f(u^0)] dx dt}_{:=I_{62}},
\end{split}
\end{equation*}
which implies 
\begin{equation*} 
\begin{split}
     I_{61}  & = \int_0^T \frac{d}{dt} \int_{\R_+} W^\eps \d_t V \, dx  dt \underbrace{ - \int_0^T \int_{\R_+}  W^\eps \d_{tt} V dx \, dt }_{:=I_{612}} \\
     & \stackrel{\eqref{eq:dataz}}{=}   \underbrace{\int_{\R_+} \left. W^\eps \d_t V \right|_{t=T} dx}_{:=I_{611}} + I_{612}. 
     \end{split}
\end{equation*}
By using, among other things, the Young Inequality, we arrive at 
\begin{equation*}
\begin{split}
       |I_{611}|  & \stackrel{\eqref{eq:linftyW}}{\leq}  \| w^\eps (T, \cdot) \|_{L^2}  \int_{\R_+} \sqrt{x} 
      \left| \d_t V \left( t, \frac{x}{\eps}  \right) \right| \ dx  \\ & 
      \stackrel{\eqref{e:decayV0t}}{\leq} \hat C \eps^{1/2}  \| w^\eps (T, \cdot) \|_{L^2}  \int_{\R_+} \sqrt{\frac{x}{\eps}} \exp \left( - \frac{x\sqrt{c}}{2\eps}   \right) \ dx  \\ & 
       \leq \eps^{3/2} \hat C \| w^\eps (T, \cdot) \|_{L^2}   
       \stackrel{\text{Young}}{\leq} \frac{c}{8}
       \| w^\eps (T, \cdot) \|^2_{L^2_x}  + \hat C \eps^{3}.    
\end{split}
\end{equation*}
Similarly,
\begin{equation*}
\begin{split}
       |I_{612}|  & \stackrel{\eqref{eq:linftyW}}{\leq} \int_0^T \! \int_{\R_+} \sqrt{x} \| w^\eps (t, \cdot) \|_{L^2} 
      \left| \d_{tt} V \left( t, \frac{x}{\eps}  \right) \right| \ dx dt \\ & 
      \stackrel{\eqref{e:decayV0tt}}{\leq}  \sqrt{\eps} \int_0^T \| w^\eps (t, \cdot) \|_{L^2}  \int_{\R_+} \sqrt{\frac{x }{\eps}} \exp \left( - \frac{x \sqrt{c} }{ 4 \eps}    \right) \ dx dt \\ & 
       \leq \hat C \eps^{3/2}  \int_0^T \| w^\eps (t, \cdot) \|_{L^2_x} dt \\& 
       \stackrel{\text{Young}, \, T \leq T_0}{\leq}
       \int_0^T  \| w^\eps (t, \cdot) \|^2_{L^2_x} dt +  \hat C \eps^{3}.  
\end{split}
\end{equation*}
To control the term $I_{62}$,  we set 
\begin{equation}\label{eq:acca}\begin{split} 
       H(u, V, u^0): & = f(u^\eps_a)-f(u)- f(u^0+V)+f(u^0) \\ & \stackrel{\eqref{eq:u_app}}{=} V \int_0^1 [f'(u+ \xi V) - f'(u^0+ \xi V) ] d \xi \\
       & = V [u - u^0] \int_0^1 \int_0^1 f''(u^0 + \eta [u- u^0]+ \xi V) \, d \eta \ d \xi.
\end{split}
\end{equation}
Next, we recall that $u^0 = u (x=0)$ and deduce by the Lagrange Theorem that 
\begin{equation}\label{e:Lagrange}
  |u- u^0| (t, x) \leq \| \d_x u (t, \cdot)\|_{L^\infty} x.
  \end{equation}
This in turn implies 
\begin{equation*}
    |H(u, V, u^0)(t, x)| \leq \hat C \left|V\left(t,\frac{x}{\eps}\right)\right| x
\end{equation*}
and hence, owing to~\eqref{e:decay},
\begin{equation}
    \label{eq:stimeH}
       \| H (t, \cdot) \|^2_{L^2} \leq  \hat C \eps^{3}, 
       \quad \text{for every $t \in [0, T]$}.
\end{equation} 
Going back to $I_{62}$, we have 
\begin{equation*}
    \begin{split}
         I_{62} &  \stackrel{\eqref{eq:acca}}{=}
        \int_0^T \! \int_{\R_+} \d_t W^\eps \d_x H dx dt 
        \stackrel{\eqref{eq:W}}{=} - \int_0^T \! \int_{\R_+} \d_t w^\eps H dx dt \\
        & = - \int_0^T  \frac{d}{dt} \int_{\R_+} w^\eps H dx dt + 
          \int_0^T  \! \int_{\R_+} w^\eps \d_t H dx dt \\ &
          \stackrel{w^\eps(0, \cdot) \equiv 0 }{=}
       \underbrace{- \int_{\R_+} \left. w^\eps H \right|_{t=T}  dx}_{: = I_{621}} + 
          \underbrace{\int_0^T  \! \int_{\R_+} w^\eps \d_t H dx dt}_{: = I_{622}}.   
    \end{split}
\end{equation*}
By applying the H\"older and Young Inequalities,  we get 
\begin{equation*}\begin{split}
    |I_{621}| & \stackrel{\text{H\"older}}{\leq} \| w^\eps (T, \cdot) \|_{L^2} \|  H (T, \cdot) \|_{L^2}  
    \stackrel{\text{Young}}{\leq} \frac{c}{8} \|  w^\eps (T, \cdot) \|^2_{L^2_x} +  
    \frac{2}{c} \|  H(T, \cdot) \|^2_{L^2_x}\\
     &
    \stackrel{\eqref{eq:stimeH}}{\leq}
    \frac{c}{8} \|  w^\eps (T, \cdot) \|^2_{L^2_x}  + 
     \hat C \eps^{3}. 
\end{split}
\end{equation*}
To control $I_{622}$, we combine the explicit expression~\eqref{eq:acca} of $H$, the inequality~\eqref{e:Lagrange} and the analogous inequality for $[\d_t u - (u^0)']$ to infer  
\begin{equation*}
\begin{split}
         |\partial_t H (t, x)| & \leq  \hat C \left| \partial_t V \left(t,\frac{x}{\eps}\right) \right| x + 
      \hat C \left|  V \left(t,\frac{x}{\eps}\right) \right| x  \\ & \quad +  \hat C \left|  V \left(t,\frac{x}{\eps}\right) \right| x  \big[ \| \d_t u \|_{L^\infty} + \| \d_t V \|_{L^\infty} \big] \\ & 
      \stackrel{\eqref{eq:regularity},\eqref{e:decayV0t}}{\leq}
      \hat C \left| \partial_t V \left(t,\frac{x}{\eps}\right) \right| x + 
      \hat C \left|  V \left(t,\frac{x}{\eps}\right) \right| x, 
\end{split}
\end{equation*}
which in turn implies, owing to~\eqref{e:decay} and~\eqref{e:decayV0t},  
\begin{equation*}
    \| \partial_t H (t, \cdot) \|^2_{L^2} \leq 
   \hat C \eps^3, \quad \text{for every $t \in [0, T]$.}
\end{equation*}
 Combining the above inequality with the Young and H\"older Inequalities, we eventually arrive at  
\begin{equation*}
    \begin{split}
        |I_{622}| & \leq \int_0^T \! \int_{\R_+} (w^\eps)^2 dx dt + \frac{1}{4} \int_0^T \! \int_{\R_+} (\d_t H)^2 dx dt 
         \leq \int_0^T \! \int_{\R_+} (w^\eps)^2 dx dt +  \hat C \eps^3.
    \end{split}
\end{equation*}
{\bf Step 6: conclusion}.
Combining the estimates at the previous steps, and assuming $\eps \leq 1$, we arrive at 
\begin{equation} \label{eq:wrapup}
    \begin{split}
         \int_{\R_+}& \left[ -\frac12 f'(u^\eps_a) - \frac{c}{4} - C ({\color{black}\| f''\|_{C^0}}) \sqrt{\eps}    
         \right] (w^\eps)^2 (T, x) \,dx  + 
         \frac{\eps^2}{2} \int_{\R_+} (\d_x w^\eps)^2 (T, x) \,dx \\ &
         \leq \hat C \int_0^T \! \! \int_{\R_+} (w^\eps)^2 (t, \cdot) \ dx dt +  \hat C \eps^{3}.   
    \end{split}
\end{equation}
If $\eps$ is small enough to have $C ({\color{black}\| f''\|_{C^0}} )\sqrt{\eps}\leq c/8$, then recalling~\eqref{eq:c} and using the Gr\"onwall Lemma we deduce from the previous inequality  that 
\begin{equation}
\begin{split}
      \| w^\eps (T, \cdot) \|^2_{L^2_x} \leq \hat C  \exp [\hat C T] \eps^{3}. 
\end{split}
\end{equation}
Using the arbitrariness of $T\leq T^\ast$ and plugging the above estimate into~\eqref{eq:wrapup} we conclude that 
\begin{equation} \label{eq:final}
    \begin{split}
         &   \int_{\R_+} \left[(w^\eps)^2 + \frac{\eps^2}{2}  (\d_x w^\eps)^2 \right](T, x)  \ dx \leq \hat C  \exp [\hat C T] \eps^{3}. 
    \end{split}
\end{equation}
We can now conclude our continuous induction argument. We recall the definition~\eqref{eq:Tast} of $T^\ast$, assume by contradiction that $T^\ast < T_0$ and use the continuity of the map $t \mapsto \| w^\eps(t, \cdot) \|_{H^1}$, which implies 
$$
  \int_{\R_+} \left[(w^\eps)^2 + \frac{\eps^2}{2}  (\d_x w^\eps)^2 \right](T^\ast, x) dx
  = \eps^2.
$$
Using~\eqref{eq:final}, we arrive at  
\begin{equation*}
    \begin{split}
           \eps^2 = \int_{\R_+} \left[(w^\eps)^2 + \frac{\eps^2}{2}  (\d_x w^\eps)^2 \right](T^\ast, x)  \ dx \leq \hat C  \exp [\hat C T^\ast] \eps^{3} \stackrel{T^\ast \leq T_0}{\leq} 
            \hat C \eps^3,
     \end{split}
\end{equation*}
which yields a contradiction provided $\eps$ is sufficiently small. This implies that $T^\ast=T_0$, and yields~\eqref{eq:main}. 

\section{Proof of the well-posedness of~\eqref{eq:KdV}}\label{s:wp}
In this section we establish the well-posedness of~\eqref{eq:KdV}. The exposition is organized as follows. In \S\ref{s:uni} we establish uniqueness of the solution belonging to a suitable regularity class. To establish existence, it suffices to prove existence of a solution of the initial-boundary value problem for the remainder term $w^\eps$, namely~\eqref{eq:w_eps}. We introduce the same approximation as in~\cite{BonaWinther}, i.e. 
\begin{equation}\label{eq:w_nu}
\d_t w^{\eps \nu}+\d_x[f'(u^\eps_a)w^{\eps \nu}+g(u^\eps_a, w^{\eps \nu})(w^{\eps \nu})^2]+\eps^2\d_{xxx}w^{\eps \nu} - \nu \d_{txx} w^{\eps \nu} +\mathcal E^{inn}+\mathcal E^{b}=0. 
\end{equation}
In \S\ref{s:app} we establish existence of the corresponding initial-boundary value problem, whereas in \S\ref{s:reg} we prove some uniform-in-$\nu$ regularity estimates, and in \S\ref{s:limit} we pass to the vanishing $\nu$ limit, completing the existence proof. Note that if the analysis in \S\ref{s:app} closely follows the one in~\cite{BonaWinther}, the proof of the regularity estimate in \S\ref{s:reg} is completely different (and more direct) from the one in~\cite{BonaWinther}. 
\subsection{Uniqueness}\label{s:uni}
We show that, if $u_1^\eps$ and $u_2^\eps$ are two solutions of the initial-boundary value problem~\eqref{eq:KdV} both belonging to the regularity class 
\begin{equation}\label{e:regularity}
 u \in L^2 (]0, T_0[; H^2 (\R_+)), \; \d_t u \in L^2 (]0, T_0[, H^{-1}(\R_+)),
\end{equation}
then $u^\eps_1 \equiv u^\eps_2$. Note that the difference $u_1^\eps-u_2^\eps$ satisfies  
 \begin{equation*}
 \left\{
 \begin{array}{ll}
  \d_t [u_1^\eps-u_2^\eps] + \d_x [ f(u_1^\eps)- f(u_2^\eps)] +\eps^2 \d_{xxx}[u_1^\eps-u_2^\eps] =0, \\
  \phantom{cc} \\
  {[  u_1^\eps-u_2^\eps](0, \cdot) =0, \quad [u_1^\eps-u_2^\eps] (\cdot, 0)=0 .} \\
 \end{array}
 \right.
 \end{equation*}
We now fix $T\in ]0, T_0[$, multiply the above equation times $[u_1^\eps - u_2^\eps]$ and integrate in $(t, x)$ over the set $[0, T]\times \R_+$. Owing to~\eqref{e:regularity},  we obtain\footnote{Note that~\eqref{e:regularity} implies, in particular, $u \in C^0([0, T_0]; L^2 (\R_+))$, so the values of $\| [u_1 - u_2](t, \cdot)\|_{L^2}$ are well-defined for every $t \in [0, T_0]$}     
\begin{equation} \label{e:uniqueness}\begin{split}
 \frac{1}{2} & \int_{\R_+} [u_1^\eps - u_2^\eps]^2(T, x) dx + S_1 + S_2 =0,
 \end{split}
\end{equation}
where the terms $S_1$ and $S_2$ are defined as follows. 
We have 
\begin{equation}\label{e:esse1} \begin{split}
 S_1 &: = \int_0^T \! \!  \int_{\R_+} [u_1^\eps - u_2^\eps]  \d_x [ f(u_1^\eps)- f(u_2^\eps)] dx dt \\ & \stackrel{[  u_1^\eps-u_2^\eps](0, \cdot) =0}{=}
 - \int_0^T \int_{\R_+} \d_x [u_1^\eps - u_2^\eps]  [ f(u_1^\eps)- f(u_2^\eps)] dx dt,
 \end{split}
\end{equation}
where the second equality follows from the integration by parts formula. Next, we use the Taylor formula with Lagrange remainder to get 
$$
  f(u_1^\eps)- f(u_2^\eps) = f' (u_1^\eps) [u_1^\eps - u_2^\eps]  + \ell (u_1^\eps, u_2^\eps)[u_1^\eps - u_2^\eps]^2
$$
for some suitable function $\ell$. Plugging the above equality into~\eqref{e:esse1} and then integrating by parts, we get 
\begin{equation*}\begin{split}
 S_1 &: = - \frac{1}{2}\int_0^T \! \!  \int_{\R_+} \d_x\big[ [u_1^\eps - u_2^\eps]^2 \big]  f'(u_1^\eps) dx dt 
 - \frac{1}{3}\int_0^T \! \!  \int_{\R_+} \d_x\big[ [u_1^\eps - u_2^\eps]^3 \big] \ell (u_1^\eps, u_2^\eps) dx dt \\
 & = \frac{1}{2}\int_0^T \! \!  \int_{\R_+}  [u_1^\eps - u_2^\eps]^2 \d_x  [f'(u_1^\eps)]  dx dt + \frac{1}{3}\int_0^T \! \!  \int_{\R_+}  [u_1^\eps - u_2^\eps]^3 \d_x[ \ell (u_1^\eps, u_2^\eps) ] dx dt,
 \end{split}
\end{equation*}
whence 
\begin{equation}\label{e:stimaesse1}
   |S_1| \leq C (\| f \|_{C^3})\int_0^T \Big[ \| \d_x u^\eps_1 \|_{L^\infty}+ \| \d_x u^\eps_2 \|_{L^\infty} \Big] 
   \| [u_1^\eps - u_2^\eps](t, \cdot) \|^2_{L^2}dt. 
\end{equation}
To control the right-hand side of~\eqref{e:stimaesse1} we use the Sobolev-Gagliardo-Niremberg Inequality
$$
  \| \d_x u \|_{L^\infty} \leq \sqrt{2} \| \d_x u \|^{1/2}_{L^2}\| \d_{xx} u \|^{1/2}_{L^2} 
$$
and recall that both $u^\eps_1$ and $u^\eps_2$ belong to the regularity class~\eqref{e:regularity}. We also have 
\begin{equation}\label{e:stimaesse2} \begin{split}
 S_2 &: = \eps^2 \int_0^T \! \!  \int_{\R_+} [u_1^\eps - u_2^\eps]  \d_{xxx} [ u_1^\eps- u_2^\eps] dx dt = - \eps^2 \int_0^T \! \!  \int_{\R_+} \d_x [u_1^\eps - u_2^\eps]  \d_{xx} [ u_1^\eps- u_2^\eps] dx dt \\
 & = \frac{\eps^2}{2} \int_0^T \big[ \d_x [u_1^\eps - u_2^\eps] \big]^2 (t, 0) dt. 
 \end{split}
\end{equation}
By plugging~\eqref{e:stimaesse1} and~\eqref{e:stimaesse2} into~\eqref{e:uniqueness} and applying the Gronwall Lemma we conclude that $\| [u^\eps_1 - u^\eps_2] (T, \cdot) \|^2_{L^2} \equiv 0$ for every $T \in [0, T_0]$, whence $u^\eps_1 \equiv u^\eps_2$.

\subsection{Existence result for the approximating system}
\label{s:app}
This paragraph aims at establishing the following. 
\begin{proposition} \label{p:exapp}
    Under the same hypotheses as in the statement of Theorem~\ref{t:main}, the Cauchy problem obtained by coupling~\eqref{eq:w_nu} with the data 
    \begin{equation} \label{e:datawnu2}
    w ^{\eps \nu} (0, x) =0, \quad w^{\eps \nu}(t, 0) =0
    \end{equation}
    has a solution $w^{\eps \nu} \in H^3(]0, T_0[ \times \R_+).$  
\end{proposition}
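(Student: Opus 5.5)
We follow the approach of Bona and Winther and turn~\eqref{eq:w_nu} into an integral equation in time, with the spatial operator $(1-\nu\d_{xx})^{-1}$ inverted. Rewrite~\eqref{eq:w_nu} as
\begin{equation*}
(1-\nu\d_{xx})\d_t w^{\eps\nu} = -\d_x\big[f'(u^\eps_a)w^{\eps\nu}+g(u^\eps_a,w^{\eps\nu})(w^{\eps\nu})^2\big]-\eps^2\d_{xxx}w^{\eps\nu}-\d_t w^{\eps\nu}+\d_t w^{\eps\nu}-\mathcal E^{inn}-\mathcal E^b.
\end{equation*}
Let $K_\nu$ be the Green operator of $1-\nu\d_{xx}$ on $\R_+$ with homogeneous Dirichlet condition, which preserves the boundary condition in~\eqref{e:datawnu2}. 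Write $\eps^2\d_{xxx}=-\frac{\eps^2}{\nu}\d_x(1-\nu\d_{xx})+\frac{\eps^2}{\nu}\d_x$. Then $\d_t w=K_\nu F(w)$, where $K_\nu\d_x$ and $K_\nu\d_{xxx}$ act as operators of order $-1$ and $+1$ respectively. The last one is the delicate point: we instead write $K_\nu\d_{xxx}=-\nu^{-1}\d_x+\nu^{-1}K_\nu\d_x$ up to a boundary correction involving $\d_{x}$ of the kernel. In this way the right-hand side becomes
\begin{equation*}
\frac{\eps^2}{\nu}\d_x w+\text{(bounded operators on $H^1$)}.
\end{equation*}
To avoid the unbounded transport term $\frac{\eps^2}{\nu}\d_x w$, we treat it together with the time derivative. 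The resulting linear operator $\d_t-\frac{\eps^2}{\nu}\d_x$ is a transport with characteristics entering $\R_+$ from the right, i.e. outgoing at $x=0$. This is consistent with imposing exactly one boundary condition only if we keep the boundary term explicitly. As a cleaner alternative, we apply $K_\nu$ to $\d_{xx}$ directly, using $K_\nu\d_{xx}=\nu^{-1}(K_\nu-I)$ on functions vanishing at $0$. We then write $\eps^2 K_\nu\d_{xxx}w=\eps^2\nu^{-1}(K_\nu-I)\d_x w+\text{boundary term}$, the boundary term coming from $\d_x w(t,0)\neq0$ and being expressible through the explicit kernel $e^{-x/\sqrt\nu}$ multiplied by $\d_x w(t,0)$.

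With this reformulation we solve $\d_t w=\mathcal F(w)$ locally by a Picard iteration in $C^0([0,\tau];H^1_0\cap H^2)$. The nonlinearity $g(u^\eps_a,w)w^2$ is locally Lipschitz in $H^2$ because $f\in C^4$. The sources $K_\nu\mathcal E^{inn}$ and $K_\nu\mathcal E^b$ are regular by~\eqref{eq:regularity}, Lemma~\ref{l:decayV0t} and~\eqref{eq:stimeH}. The time of existence $\tau$ depends only on $\nu,\eps$ and the norm of the data.

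To extend to $[0,T_0]$ we need a priori bounds uniform on that interval, for fixed $\nu$. We multiply~\eqref{eq:w_nu} by $w^{\eps\nu}$ and integrate. The $\nu$-term gives $\frac{\nu}{2}\frac{d}{dt}\|\d_x w\|^2_{L^2}$, the dispersive term gives $\frac{\eps^2}{2}(\d_x w(t,0))^2\ge0$, and the linear flux term is bounded by $\hat C\|w\|^2_{L^2}$. The quadratic term is controlled via $\|w\|_{L^\infty}\le C\|w\|_{H^1}$. This gives a Gr\"onwall bound on $\|w\|^2_{L^2}+\nu\|\d_x w\|^2_{L^2}$, which may a priori blow up because of the cubic term. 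We close it on $[0,T_0]$ by repeating the argument with the energy $E$ of~\eqref{eq:E} augmented by $\nu$-terms, following Steps 1--6 of \S\ref{s:proof}, where the cubic term was harmless thanks to the $\eps^3$ smallness. Higher regularity (the $H^2$ and $H^3$ norms in $x$, and then in $t$ via the equation) is obtained by differentiating in $t$, since $\d_t w$ satisfies the linearized equation with zero boundary data. This is where the compatibility conditions~\eqref{e:nonmipiace} and~\eqref{e:Vt0a0} are used: they guarantee that $\d_t w(0,\cdot)=-K_\nu(\mathcal E^{inn}+\mathcal E^b)(0)$ is compatible.

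The main obstacle is the handling of the dispersive term after inversion of $1-\nu\d_{xx}$ on the half-line, namely the boundary contribution $\d_x w(t,0)$. We expect to resolve it by exploiting that, for fixed $\nu>0$, the equation is a well-posed pseudo-parabolic/Sobolev-type equation. Concretely, we would prove existence for the linear problem by Galerkin approximation on an $H^1_0$-basis, with energy estimates in $H^1$ and then $H^3$ via $\d_t$-differentiation, and then obtain the nonlinear solution by a fixed point in $H^3(]0,T_0[\times\R_+)$, using the a priori bounds above.
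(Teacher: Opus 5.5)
\textbf{There is a genuine gap at the local-existence step.} After inverting $1-\nu\d_{xx}$ with a Dirichlet condition, your right-hand side $\mathcal F(w)$ still contains two bad terms: the transport term $\frac{\eps^2}{\nu}\d_x w$ and the trace term $\frac{\eps^2}{\nu}\d_x w(t,0)e^{-x/\sqrt\nu}$. Your ``cleaner alternative'' $K_\nu\d_{xx}=\nu^{-1}(K_\nu-I)$ produces exactly the same $\frac{\eps^2}{\nu}\d_x w$ through its $-I$ part, so it removes nothing.

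Consequently $\mathcal F$ loses one derivative. It maps $H^2$ only into $H^1$, and the trace $\d_x w(t,0)$ is not even defined on $H^1$. So $\mathcal F$ is not Lipschitz on $C^0([0,\tau];H^1_0\cap H^2)$ or on any $H^k$-based space, and the Picard iteration you announce cannot be run. You call this the ``main obstacle'', but the remedy (Galerkin for the linear problem, then a fixed point in $H^3(]0,T_0[\times\R_+)$) is only named. Existence is therefore not established.

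The continuation step has a related problem. Any local theory in $H^2$ or $H^3$ gives an existence time depending on those norms. The $\d_t W^{\eps\nu}$-energy argument of \S\ref{s:main} controls only $\|w^{\eps\nu}\|_{L^2}$, $\eps\|\d_x w^{\eps\nu}\|_{L^2}$, and hence $\|w^{\eps\nu}\|_{L^\infty}$. A fixed point taken directly on $[0,T_0]$ would in general not be a contraction.

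\textbf{The missing idea is to straighten the characteristics of $\d_t-\frac{\eps^2}{\nu}\d_x$,} which you noticed but did not use. The paper sets $w^{\eps\nu}(t,x)=v^{\eps\nu}(t,\eps^2t+\nu x)$. The chain-rule contribution of $-\nu\d_{txx}w^{\eps\nu}$ then cancels $\eps^2\d_{xxx}w^{\eps\nu}$ exactly. What remains is $\d_t v-\nu^3\d_{txx}v=s$, with $s$ involving only first-order derivatives, posed on the moving half-line $\{x\ge\eps^2t\}$.

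This domain shrinks in time, so the equation can be integrated in $t$ at fixed $x$. The Helmholtz operator is then inverted with the explicit Dirichlet Green kernel at $x=\eps^2t$. The $\d_x$ in $s$ is moved onto the kernel by integration by parts, with no boundary term because the kernel vanishes at $y=\eps^2t$. The result is the map~\eqref{e:T}, a contraction on a ball of $C^0\cap L^2(\Omega_\tau)$, with no derivative loss and no boundary trace.

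Crucially, the existence time of this map depends, besides fixed quantities, only on $\|w_0^\nu\|_{C^0}+\|w_0^\nu\|_{L^2}$. That is exactly what~\eqref{e:bdh1weenu}--\eqref{e:linftyK} bound uniformly, since the extra $\nu$-term in the energy identity vanishes. The solution can therefore be restarted with a uniform step up to $T_0$, and $H^3$ regularity follows by differentiating the integral formula.
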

\subsubsection{Change of variables}
In this paragraph we establish local-in-time existence for~\eqref{eq:w_nu}
by closely following~\cite{BonaWinther}. Towards this end, we couple~\eqref{eq:w_nu} with the initial and boundary data
\begin{equation}
    \label{e:datawnu}
        w^{\eps \nu}(0, x) = w_0 (x), \quad w^{\eps \nu}(t, 0) =0,
\end{equation}
where $w_0 \in H^3(\R_+)$ satisfies $w_0(0)=0$. We introduce the change of variables
\begin{equation} \label{e:changeofvar}
     w^{\eps \nu}(t, x) = v^{\eps \nu} (t, \eps^2 t + \nu x), 
\end{equation}
and from~\eqref{eq:w_nu} we deduce 
\begin{equation}\label{eq:v_nu}
\left\{
\begin{aligned}
&\d_t v^{\eps \nu}- \nu^3 \d_{txx} v^{\eps \nu} = s(t, x), \\
& v^{\eps \nu} (x= \eps^2 t) =0, \qquad v^{\eps \nu}(t=0) = w_0^\nu,
\end{aligned}
\right.
\end{equation}
provided 
\begin{equation}\label{e:source}
w_0^\nu(x) = w_0 (x/\nu), \qquad s(t, x) = - \eps^2 \d_x v^{\eps \nu} - \nu \d_x[f'(u^\eps_a)v^{\eps \nu}+g(u^\eps_a, v^{\eps \nu})(v^{\eps \nu})^2] - \tilde{\mathcal E}^{inn}-\tilde{\mathcal E}^{b},
\end{equation}
and $\tilde{\mathcal E}^{inn}$ and $\tilde{\mathcal E}^{b}$ are defined by rescaling 
$\mathcal E^{inn}$ and $\mathcal E^{b}$, respectively. More precisely, 
\begin{equation*}
    \tilde{\mathcal E}^{inn} (t, x): = \mathcal E^{inn}
    \left( t, \frac{x - \eps^2 t}{\nu}\right), \quad 
     \tilde{\mathcal E}^{b} (t, x): = \mathcal E^{b}
    \left( t, \frac{x - \eps^2 t}{\nu}\right).
\end{equation*}
We integrate the equation at the first line of~\eqref{eq:v_nu} in time and arrive at the ordinary differential equation
$$
    v^{\eps \nu}- \nu^3 \d_{xx} v^{\eps \nu} = w_0^\nu  - \nu^3 \d_{xx} w_0^\nu+ \int_0^t s(t', \cdot) d t',
$$
which can be solved explicitly using the initial datum in~\eqref{eq:w_nu}. We get 
\begin{equation}\label{e:fixedpt}
\begin{split}
      v^{\eps \nu} (t, x) & = \frac{1}{2 \nu^{3/2}}\int_{\eps^2 t}^{+ \infty} \left[ \exp \big[ - |x-y| \nu^{-3/2}\big]- \exp \big[ - (x+y - 2 \eps^2 t) \nu^{-3/2} \big] \right]\\
      & \qquad \times 
   \left[ w_0^\nu -   \nu^3 \d_{yy} w_0^\nu+ \int_0^t s (t', y) d t' \right] dy .  
   \end{split}
\end{equation} 
We now perform some computations to write the above formula in a more convenient way. First, using the explicit expression of $s$ in~\eqref{e:source}, the Integration by Parts Formula and the boundary datum $v^{\eps \nu}(t, \eps^2 t) =0$, we get 
\begin{equation*}
\begin{split}
      & - \frac{1}{2 \nu^{3/2}} \int_{\eps^2 t}^{+ \infty} \left[ \exp \big[ - |x-y| \nu^{-3/2}\big]- 
        \exp \big[ - (x+y - 2 \eps^2 t) \nu^{-3/2} \big] \right] \\ & \quad \times  \int_0^t \eps^2 \d_y v^{\eps \nu} + \nu \d_y  
      [f'(u^\eps_a)v^{\eps \nu}+g(u^\eps_a, v^{\eps \nu})(v^{\eps \nu})^2] dt' \ dy   \\ &
      = \frac{1}{2 \nu^3}\int_{\eps^2 t}^{+ \infty} \left[ \mathrm{sign} [x-y] \exp \big[ - |x-y| \nu^{-3/2}\big]+ \exp \big[ - (x+y - 2 \eps^2 t) \nu^{-3/2} \big] \right] \\ & \quad \times \int_0^t \  v^{\eps \nu} + \nu [
      f'(u^\eps_a)v^{\eps \nu}+g(u^\eps_a, v^{\eps \nu})(v^{\eps \nu})^2] dt' \ dy.
\end{split}
\end{equation*}
Applying the Integration by Parts formula twice, and using the fact that both $w_0^\nu$ and the exponential terms in~\eqref{e:fixedpt} vanish at $x=\eps^2t$, we similarly obtain 
\begin{equation*}
\begin{split}
       & - \frac{1}{2 \nu^{3/2}} \int_{\eps^2 t}^{+ \infty} \left[ \exp \big[ - |x-y| \nu^{-3/2}\big]- 
        \exp \big[ - (x+y - 2 \eps^2 t) \nu^{-3/2} \big] \right] \nu^3 \d_{yy} w_0^\nu \ dy   \\ &
      = \frac{1}{2}\int_{\eps^2 t}^{+ \infty} \left[ \mathrm{sign} [x-y] \exp \big[ - |x-y| \nu^{-3/2}\big]+ \exp \big[ - (x+y - 2 \eps^2 t) \nu^{-3/2} \big] \right] \d_{y} w_0^\nu \ dy \\
      & = w_0^\nu (x)  -  \frac{1}{2 \nu^{3/2}} \int_{\eps^2 t}^{+ \infty} \left[ \exp \big[ - |x-y| \nu^{-3/2}\big]- 
        \exp \big[ - (x+y - 2 \eps^2 t) \nu^{-3/2} \big] \right]  w_0^\nu \ dy.  
\end{split}
\end{equation*}
Combining the above formulas with~\eqref{e:fixedpt} we eventually arrive at 
\begin{equation}\label{e:fixedpt2}
\begin{split}
      v^{\eps \nu} (t, x) & = w_0^\nu (x) - \frac{1}{2 \nu^{3/2}}\int_{\eps^2 t}^{+ \infty} \left[ \exp \big[ - |x-y| \nu^{-3/2}\big]- \exp \big[ - (x+y - 2 \eps^2 t) \nu^{-3/2} \big] \right]\\
      & \qquad \times 
   \left[\int_0^t [ \tilde{\mathcal E}^{inn}+\tilde{\mathcal E}^{b}] d\tau \right] dy  \\
   & + \frac{1}{2 \nu^3}\int_{\eps^2 t}^{+ \infty} \left[ \mathrm{sign} [x-y] \exp \big[ - |x-y| \nu^{-3/2}\big]+ \exp \big[ - (x+y - 2 \eps^2 t) \nu^{-3/2} \big] \right] \\ & \quad \times \int_0^t \Big[  v^{\eps \nu} + \nu [
      f'(u^\eps_a)v^{\eps \nu}+g(u^\eps_a, v^{\eps \nu})(v^{\eps \nu})^2] \Big] dt' dy.
   \end{split}
\end{equation} 
We are now in a position to establish the local-in-time well-posedness of the Cauchy problem~\eqref{eq:w_nu},\eqref{e:datawnu}.
\begin{lemma}\label{l:lerp}
        Under the same assumptions as in the statement of Theorem~\ref{t:main}, let $g$ be the same as in~\eqref{eq:g}, $ w^\nu_0 \in  C^0 (\R_+) \cap L^2 (\R_+)$ and 
        $$
           \Omega_\tau : = \{ (t, x): \; t \in [0, \tau], \; x \ge \eps^2 t \}. 
        $$
        Then there is $\hat T$ such that, for every $\tau \leq \hat T$, there is a (unique) solution $v^{\eps \nu} \in C^0(\Omega_{\tau}) \cap L^2 (\Omega_\tau)$  of the fixed point problem~\eqref{e:fixedpt}. The value 
        $ \hat T$ only depends on the following quantities:  $\nu, \|\mathcal E^{inn}\|_{C^0}, \| \mathcal E^{b}\|_{C^0},  \|f\|_{C^0}, \|g\|_{C^0}, \| w_0^\nu \|_{C^0}, \| w_0^\nu \|_{L^2}$.  Also, if $w_0^\nu \in H^3 (\R_+)$ then $v^{\eps \nu} \in H^3 (\Omega_{\hat T})$.
\end{lemma}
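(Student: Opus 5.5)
We set up a contraction mapping argument directly on the integral formulation~\eqref{e:fixedpt2}, which is equivalent to~\eqref{e:fixedpt} for continuous solutions. Let $X_\tau := C^0(\Omega_\tau)\cap L^2(\Omega_\tau)$ be equipped with the norm
\[
\|v\|_{X_\tau} := \sup_{t\in[0,\tau]}\big(\|v(t,\cdot)\|_{L^\infty(x\ge\eps^2 t)}+\|v(t,\cdot)\|_{L^2(x\ge\eps^2 t)}\big).
\]
We work on the closed ball $B_R\subset X_\tau$ of radius $R:=2(\|w_0^\nu\|_{C^0}+\|w_0^\nu\|_{L^2})+1$. We then define $\Phi(v)$ as the right-hand side of~\eqref{e:fixedpt2}. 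It is convenient to note that the kernels
\[
k_1(x,y)=\tfrac{1}{2\nu^{3/2}}\big[e^{-|x-y|\nu^{-3/2}}-e^{-(x+y-2\eps^2t)\nu^{-3/2}}\big], \quad
k_2(x,y)=\tfrac{1}{2\nu^{3}}\big[\mathrm{sign}(x-y)e^{-|x-y|\nu^{-3/2}}+e^{-(x+y-2\eps^2t)\nu^{-3/2}}\big]
\]
satisfy $\sup_x\int|k_i|\,dy+\sup_y\int|k_i|\,dx\le C(\nu)$. By Young's (Schur's) inequality, the associated integral operators are therefore bounded on both $L^\infty$ and $L^2$ with norm $C(\nu)$. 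Moreover, $\Phi(v)$ is continuous in $(t,x)$, because the kernels, the moving lower limit $\eps^2 t$ and the time integrals are all continuous.

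Next we check that $\Phi$ maps $B_R$ into itself. The term $w_0^\nu$ contributes at most $R/2-1/2$. The term containing $\tilde{\mathcal E}^{inn}+\tilde{\mathcal E}^b$ is bounded in $L^\infty$ by $C(\nu)\tau(\|\mathcal E^{inn}\|_{C^0}+\|\mathcal E^b\|_{C^0})$. For the $L^2$ bound of this term we use the decay structure: $\mathcal E^{inn}=\eps^2\d_{xxx}u\in L^2$, and by~\eqref{e:decayV},~\eqref{e:decayV0t} and~\eqref{eq:stimeH} the boundary term $\mathcal E^b$ belongs to $L^2$ as well. Its $L^2$ norm is controlled by quantities depending on $\nu$ and the data, so it too is $O(\tau)$. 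The nonlinear term is bounded by $C(\nu)\tau\big(R+\|f'\|_{C^0}R+\|g\|_{C^0}R^2\big)$ in $L^\infty$, and the same bound holds in $L^2$, using $\|v^2\|_{L^2}\le\|v\|_{L^\infty}\|v\|_{L^2}$. For the contraction, we use that $g$ is Lipschitz in its second argument, with constant controlled by $\|f\|_{C^3}$, and that $v^2-\tilde v^2=(v+\tilde v)(v-\tilde v)$. This gives
\[
\|\Phi(v)-\Phi(\tilde v)\|_{X_\tau}\le C(\nu,\|f\|,\|g\|)\,\tau\,(1+R)\,\|v-\tilde v\|_{X_\tau}.
\]
Choosing $\hat T$ small in terms of the listed quantities makes $\Phi$ a contraction on $B_R$, and Banach's fixed point theorem yields existence and uniqueness in $B_R$. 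Uniqueness in all of $X_\tau$ then follows from the same estimate by a Gronwall argument on $[0,t]$.

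The step we expect to be the main obstacle is the $H^3(\Omega_{\hat T})$ regularity when $w_0^\nu\in H^3$. The plan is to differentiate~\eqref{e:fixedpt} rather than~\eqref{e:fixedpt2}. The map $F\mapsto (1-\nu^3\d_{xx})^{-1}F$ with homogeneous Dirichlet condition at $x=\eps^2 t$ gains two $x$-derivatives, so $v=(1-\nu^3\d_{xx})^{-1}\big[w_0^\nu-\nu^3\d_{xx}w_0^\nu+\int_0^t s\big]$. Since $s$ contains only first derivatives of $v$ (and of the smooth coefficients $u^\eps_a$, $\mathcal E$), we bootstrap: first $v\in C^0(H^1)$, hence $s\in C^0(L^2)$, hence $v\in C^0(H^2)$, hence $s\in C^0(H^1)$, hence $v\in C^0(H^3)$. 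At each stage the linear-in-$v$ bounds, together with the a priori $L^\infty$ bound on $v$, close on $[0,\hat T]$ through a Gronwall argument. Time derivatives then come from the equation $\d_t v=(1-\nu^3\d_{xx})^{-1}s$, and differentiating again in $t$ gives $\d_t^k\d_x^j v\in L^2$ for $k+j\le3$. The delicate point is the moving boundary $x=\eps^2 t$: differentiating the kernel in $t$ produces boundary terms. We will handle these by shifting to the fixed half-line $x'=x-\eps^2 t$, which only adds a smooth transport term $\eps^2\d_{x'}$. The regularity of $u^\eps_a$ and of the error terms needed for this bootstrap is provided by~\eqref{eq:regularity},~\eqref{eq:regdata} and Lemma~\ref{l:decayV0t}.
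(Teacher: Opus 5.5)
Your proposal follows the paper's proof. You run a contraction for the right-hand side of~\eqref{e:fixedpt2} on a ball of $C^0\cap L^2$ whose radius is about twice the size of $w_0^\nu$ (with $\sup_t\|\cdot\|_{L^2_x}$ in place of the space-time norm of $L^2(\Omega_\tau)$). You then use a derivative-gain argument for $H^3$: the paper differentiates~\eqref{e:fixedpt2}, while you use elliptic regularity of $(1-\nu^3\d_{xx})^{-1}$, which is the same mechanism.

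Your execution is more careful than the paper's sketch in three ways:
\begin{itemize}
\item The $+1$ in $R$ is actually needed. With $M=2\|w_0^\nu\|_{C^0}+2\|w_0^\nu\|_{L^2}$ the ball reduces to $\{0\}$ when $w_0^\nu\equiv 0$, which is the case used afterwards, yet the map does not fix $0$ because of $\mathcal E^{inn}+\mathcal E^b$.
\item Your Gronwall step gives the uniqueness outside the ball that the statement asserts.
\item You rightly use $\sup_t\|(\mathcal E^{inn}+\mathcal E^b)(t,\cdot)\|_{L^2}$ and a Lipschitz bound on $g$ (i.e.\ $\|f\|_{C^3}$). Both are needed but are missing from the list in the statement.
\end{itemize}

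Three corrections are needed.

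First, the equivalence of~\eqref{e:fixedpt} and~\eqref{e:fixedpt2}, which you take from the paper, requires $w_0^\nu(\eps^2 t)=0$. Since $w_0^\nu$ vanishes at $x=0$ and not at $x=\eps^2 t$, the second integration by parts leaves the term $-w_0^\nu(\eps^2 t)\exp[-(x-\eps^2 t)\nu^{-3/2}]$. Without it the fixed point does not vanish on the boundary. Adding it to your map only changes $R$, and it vanishes for $w_0^\nu\equiv 0$.

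Second, \eqref{eq:stimeH} bounds $H$, not $\d_x H$. To get $\mathcal E^b\in L^2$ you also need the decay of $\d_y V$. This follows from~\eqref{e:veraeq}: $|\d_y V|=\sqrt{-2F(\bar u^0,V)}\le \|f'\|_{C^0}^{1/2}|V|$.

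Third, passing to $x'=x-\eps^2 t$ does not only add a transport term. Because $\d_t$ also sits inside $\nu^3\d_{txx}$, you recover the rescaled $w^{\eps\nu}$-equation $\d_t\tilde v-\nu^3\d_{tx'x'}\tilde v+\eps^2\nu^3\d_{x'x'x'}\tilde v=-\nu\d_{x'}[\cdots]-\mathcal E^{inn}-\mathcal E^b$. Equivalently, in the moving frame
$\d_t v=(1-\nu^3\d_{xx})^{-1}s-\eps^2\,\d_x v(t,\eps^2 t)\exp[-(x-\eps^2 t)\nu^{-3/2}]$,
where the inverse carries the homogeneous Dirichlet condition at $x=\eps^2 t$. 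The extra term appears because $v(t,\eps^2t)=0$ forces $\d_t v=-\eps^2\d_x v$ on $x=\eps^2 t$.

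Your bootstrap still works up to two time derivatives. However, $\d_{ttt}v$ contains $\frac{d^2}{dt^2}\d_x v(t,\eps^2 t)$, and hence the trace of $\d_x^3 v$ on $x=\eps^2 t$. That trace is not defined at fixed $t$ when only $v(t,\cdot)\in H^3$ is known. It is in fact in $L^2(0,\hat T)$, because the boundary is transversal to the time direction. Concretely, the traces of $\d_x^3 w_0^\nu$ and of $\int_0^t\d_x s\,dt'$ along $x=\eps^2 t$ are square integrable in $t$. This step is missing from your sketch, as it is from the paper's ``iterating the same argument''.
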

\begin{proof}
The proof closely follows the argument in~\cite[\S3]{BonaWinther}, so we only provide a sketch. \\
{\bf Step 1:} we fix $\tau>0$ to be determined in the following and $M: =2 \| w_0^\nu \|_{C^0} +  2 \| w_0^\nu \|_{L^2} $. We term $B_M(0)$ the closed ball of radius $M$ and center at $0$ in $C^0_b \cap L^2  (\Omega_\tau)$, equipped with the norm $\| \cdot \|_{C^0} + \| \cdot \|_{L^2}$. We define the map $\mathcal T: B_M (0) \to  C^0 \cap L^2 (\Omega_\tau)$ by setting 
\begin{equation}\label{e:T}
\begin{split}
      [\mathcal T z] &(t, x)  =  w_0^\nu (x) - \frac{1}{2 \nu^{3/2}}\int_{\eps^2 t}^{+ \infty} \left[ \exp \big[ - |x-y| \nu^{-3/2}\big]- \exp \big[ - (x+y - 2 \eps^2 t) \nu^{-3/2} \big] \right]\\
      & \qquad \times 
   \left[\int_0^t [ \tilde{\mathcal E}^{inn}+\tilde{\mathcal E}^{b}] d\tau \right] dy  \\
   & + \frac{1}{2 \nu^3}\int_{\eps^2 t}^{+ \infty} \left[ \mathrm{sign} [x-y] \exp \big[ - |x-y| \nu^{-3/2}\big]+ \exp \big[ - (x+y - 2 \eps^2 t) \nu^{-3/2} \big] \right] \\ & \quad \times \int_0^t \Big[  z + \nu [
      f'(u^\eps_a)v^{\eps \nu}+g(u^\eps_a, z)z^2] \Big] dt' dy
   \end{split}
\end{equation} 
and point out that, due to the decay of the exponential functions in the above formula, there is $\hat T = \hat T (\nu, \|\mathcal E^{inn}\|_{C^0}, \| \mathcal E^{b}\|_{C^0},  \|f\|_{C^0}, \|g\|_{C^0}, M)$ such that for every $\tau \leq \hat T$ we have that $\mathcal T$ attains values in $B_M(0)$ and is moreover a contraction (that is, a Lipschitz continuous function with Lipschitz constant smaller than $1$). By the Contraction Map Theorem, we conclude that there is a unique function $v^{\eps \nu} \in B_M(0)$ satisfying~\eqref{e:fixedpt2}. \\
{\bf Step 2:} we assume that $w_0^\nu \in C^1 (\R_+)$ and compute the $x$ derivative~\eqref{e:fixedpt2}, obtaining
\begin{equation}\
\begin{split}
      \d_x v^{\eps \nu}&  (t, x)  =  \d_x w_0^\nu (x) \\ & - \frac{1}{2 \nu^{3}}\int_{\eps^2 t}^{+ \infty} \left[- \mathrm{sign} [x-y] \exp \big[ - |x-y| \nu^{-3/2}\big] \right. \\ &
      \quad \left. + \exp \big[ - (x+y - 2 \eps^2 t) \nu^{-3/2} \big] \right] \times 
   \left[ \int_0^t [ \tilde{\mathcal E}^{inn}+\tilde{\mathcal E}^{b}] dt' \right] dy  \\
   & - \frac{1}{2 \nu^{9/2}}\int_{\eps^2 t}^{+ \infty} \left[ \exp \big[ - |x-y| \nu^{-3/2}\big]+   \exp \big[ - (x+y - 2 \eps^2 t) \nu^{-3/2} \big] \right] \\ & \quad \times \int_0^t \Big[  v^{\eps \nu} + \nu [
      f'(u^\eps_a)v^{\eps \nu}+g(u^\eps_a, v^{\eps \nu})(v^{\eps \nu})^2] \Big] d t' \ dy \\ &  \quad + \frac{1}{\nu^3} \int_0^t  v^{\eps \nu} (t, x)+ \nu [
      f'(u^\eps_a)v^{\eps \nu}+g(u^\eps_a, v^{\eps \nu})(v^{\eps \nu})^2](t', x) dt'. \phantom{\int}
   \end{split}
\end{equation} 
Given that $\d_x w_0^\nu \in  L^2(\Omega_\tau)$ by assumption, and recalling that $f \in C^4$, the above formula implies that $\d_x v^{\eps \nu} \in L^2 (\Omega_\tau)$. By an analogous argument we obtain $\d_t v^{\eps \nu} \in L^2 (\Omega_\tau)$, whence $v^{\eps \nu} \in H^1 (\Omega_{\tau})$. By iterating the same argument we get $v^{\eps \nu} \in H^3 (\Omega_{\tau})$.
\end{proof}
\subsubsection{Existence on the time interval $[0, T_0]$ for the approximating equation}
Applying Lemma~\ref{l:lerp} with $w_0^\nu \equiv 0$ and going back to the original variables $w^{\eps \nu}$ through~\eqref{e:changeofvar}, we establish local in time existence of a smooth solution of the Cauchy problem~\eqref{eq:w_nu},\eqref{e:datawnu2}. To conclude the proof of Proposition~\ref{p:exapp}, we have to show that the solution can be extended up to the existence time $T_0$ of the smooth solution of~\eqref{eq:burg}. Towards this end, we proceed as follows. If the value $\hat T$ in the statement of Lemma~\ref{l:lerp} satisfies $\hat T \ge T_0$, then there is nothing to prove. Otherwise, we go back to the analysis in \S\ref{s:main} and multiply~\eqref{eq:w_nu} times $\d_t W^{\eps \nu}$, where $W^{\eps \nu}$ is the anti-derivative of $w^{\eps \nu}$, that is, the function defined by~\eqref{eq:W} with $w^\eps$ replaced by $w^{\eps \nu}$. Note that we can repeat the whole analysis in \S\ref{s:main}, the only difference is that we have to control the additional term 
$$
   \nu \int_0^T \! \! \! \!\int_{\R_+}\! \!\!  \d_t W^{\eps \nu} \d_{txx} w^{\eps \nu}\!  = \! - \nu \int_0^T  \! \! \! \! \int_{\R^+}\! \!\! \!
   \d_t w^{\eps \nu} \d_{tx} w^{\eps \nu} dx \! =\!  - \frac{\nu}{2} \int_0^T\! \!  \! \! \int_{\R^+} \! \!\! \!\d_x[ (\d_t w^{\eps \nu})^2 ] dx\!  =\! 0, 
$$
where in the last equality we have used the homogeneous boundary condition in~\eqref{eq:w_nu}. Repeating the analysis in \S\ref{s:main} we conclude that 
\begin{equation} \label{e:bdh1weenu}
      \int_{\R_+} \left[(w^{\eps \nu})^2 + \frac{\eps^2}{2}  (\d_x w^{\eps \nu})^2 \right] (t, x)  \ dx  \leq \hat C \eps^3, 
\end{equation}
for every $t \in [0, \hat T]$, which in turn yields 
\begin{equation}
    \label{e:linftyK}
      \| w^{\eps \nu} (t, \cdot)  \|_{L^\infty} \leq \hat C \eps, \quad \text{for every $t \in [0, \hat T]$}. 
\end{equation}
This provides a uniform bound on the quantities on which the existence time $\hat T$ in the statement of Lemma~\ref{l:lerp} depends and implies that we iteratively apply Lemma~\ref{l:lerp} with $w_0^\nu = w^{\eps \nu}(\hat T, \cdot)$, $w_0^\nu = w^{\eps \nu}(2 \hat T, \cdot)$, etc.,  and establish existence for $w^{\eps \nu}$ on the time interval $[0, T_0]$. 
\subsection{Additional regularity}
\label{s:reg}
\subsubsection{Preliminaries}
We recall~\eqref{eq:w_nu} and that, owing to \eqref{e:V000},~\eqref{e:Vt0a0},~\eqref{eq:ein} and~\eqref{eq:eb}, $\mathcal E^{inn}(t=0)\equiv \eps^2 \d_{xxx} u_{in}$ and  $\mathcal E^{b}(t=0) = 0$, respectively.  We conclude that the function $\d_t w^{\eps \nu}(0, \cdot)$ solves at $t=0$ the elliptic problem
$$
   z \in H^1_0(\R_+), \quad z - \nu \d_{xx} z + \eps^2 \d_{xxx} u_{in}=0, 
$$
whence 
\begin{equation} \label{e:stimawt0}
   \| \d_t w^{\eps \nu} (0, \cdot) \|_{L^2 } \leq \eps^2 \| \d_{xxx} u_{in} 
   \|_{L^2} \stackrel{\eqref{e:decayV0t}}{\leq} C \eps^2 .
\end{equation}
To obtain a bound on $ \| \d_{xt} w(0, \cdot) \|_{L^2 } $ which does not depend on $\nu$, we use the equation for $w^{\eps \nu}$ to deduce 
\begin{equation*}
    \begin{split}
        \nu \d_{xxt }w^{\eps \nu }(0, 0)& =[ \mathcal E^{inn}+\mathcal E^{b}] (0, 0) = \eps^2 \d_{xxx} u_{in} (0) \stackrel{\eqref{e:nonmipiace}}{=}          0,
    \end{split}
\end{equation*}
whence $h= \d_{xt} w^{\eps \nu} (0, \cdot)$ is the solution of the elliptic problem
$$
  h \in H^1 (\R_+), \quad h'(0)=0, \quad h - \nu \d_{xx} h + \eps^2 \d_{xxxx} u_{in} =0, 
$$
so 
\begin{equation} \label{e:stimawtx0}
   \| \d_{tx} w^{\eps \nu} (0, \cdot) \|_{L^2 } \leq \eps^2 \| \d_{xxxx} u_{in}  
   \|_{L^2} \leq \hat C \eps^2.  
\end{equation}
\subsubsection{Continuous induction argument}
We define $T^{\ast \ast}$ by setting 
\begin{equation} \label{eq:Tastast}
      T^{\ast \ast} : = \sup \left\{ \! t \in [0, T_0]: \! \int_{\R_+} 
      \! \! \left[(\d_t w^{\eps \nu})^2 + \frac{\eps^2}{2}  (\d_{xt} w^{\eps \nu})^2 \right](\tau, x)  \ dx  \leq A \eps^2 \; \forall \, \tau \in [0, t] \right\}
\end{equation}
for a suitable constant $A$, to be determined in the following. 
We want to apply a continuous induction argument. Note that, as long as $t \leq T^{\ast \ast}$, we have 
\begin{equation}
\label{e:wtlinfty}
   \| \d_t w^{\eps \nu} (t, \cdot) \|_{L^\infty} \leq 
   \sqrt{2} \| \d_t w^{\eps \nu} (t, \cdot) \|^{1/2}_{L^2}
   \| \d_{tx} w^{\eps \nu} (t, \cdot) \|^{1/2}_{L^2} \leq 2^{3/4}  \sqrt{A \eps}.
\end{equation}
Next, we set 
\begin{equation} \label{e:Zeta}
    Z^{\eps \nu}(t, x) : = \int_0^x \d_{t} w^{\eps \nu} (t, y) dy
\end{equation}
and derive the equation at the first line of~\eqref{eq:w_nu} with respect to the $t$ variable to obtain
\begin{equation}
    \label{e:KdVt}
   \d_{tt} w^{\eps \nu}+\d_{xt}[f'(u^\eps_a)w^{\eps \nu}+g(u^\eps_a, w^{\eps \nu})(w^{\eps \nu})^2]+\eps^2\d_{xxxt}w^{\eps \nu} - \nu \d_{ttxx} w^{\eps \nu} + \d_t [\mathcal E^{inn}+\mathcal E^{b}] =0.
\end{equation}
We fix $t\leq T^{\ast \ast}$, multiply the above equation times $\d_t Z^{\eps \nu}$ and integrate on $[0, T] \times \R_+$ to obtain 
$$
J_1 + \dots + J_6 =0,
$$
where the terms $J_1, \dots, J_6$ are defined in a similar way as the terms $I_1, \dots, I_6$ in \S\ref{s:main}. In Appendix~\ref{a:J16} we provide their precise definition and show that an argument fairly similar to the one in \S\ref{s:main} yields  
\begin{equation} \label{e:regfinale}
       \frac{c}{4}\| \d_t w^{\eps \nu} (T, \cdot) \|^2_{L^2} + \int_{\R_+} \! \! \frac{\eps^2}{2}  [\d_{xt} w^{\eps \nu}]^2 (T, x)  \ dx  \leq \hat C \eps^2 
      + \hat C[\sqrt{A \eps} +1 ]\int_0^T \| \d_t w^{\eps \nu} (t, \cdot) \|^2_{L^2} dt, 
    \end{equation}
provided $\eps$ is smaller than a threshold only depending on $\hat C$. 
Owing to the Gr\"onwall Lemma, this implies 
$$
   \| \d_t w^{\eps \nu} (T, \cdot) \|^2_{L^2} \leq \hat C \eps^2 \exp [2 \hat C T],
$$
provided $A \eps \leq 1$. Using the arbitrariness of $T$ and again~\eqref{e:regfinale}, this also implies $\eps^2 \| \d_{xt}  w^{\eps \nu} (T, \cdot)\|_{L^2} \leq \hat{C} \eps^2 \exp [2 \hat C T]$. By using a classical continuous induction argument, we conclude that the value $T^{\ast \ast}$ defined by~\eqref{eq:Tastast} coincides with $T_0$ provided $A \ge \hat C  \exp [2 \hat C T_0]$. This, in particular, implies
\begin{equation}
\label{e:hoe}
\int_{\R_+} 
      \! \! \left[(\d_t w^{\eps \nu})^2 + \frac{\eps^2}{2}  (\d_{xt} w^{\eps \nu})^2 \right](t, x)  \ dx  \leq \hat C \eps^2, \quad \text{for every $t \in [0, T_0]$. }
\end{equation}

\subsubsection{Conclusion}
Combining~\eqref{e:hoe} with~\eqref{eq:w_nu} we get 
\begin{equation*}
    \begin{split}
        \eps^2 \| \d_{xxx} w^{\eps \nu}(t, \cdot)\|_{H^{-1}} 
        & \leq \| \d_t w (t, \cdot) \|_{L^2} + 
        \| [f'(u_a) w + g w^2 ] (t, \cdot) \|_{L^2}+ \nu \| \d_{tx} w \|_{L^2} \\
        & \quad + \| \mathcal E^{inn} + \mathcal{E}^b \|_{L^2}  \leq \hat C \sqrt{\eps}  ,   
    \end{split}
\end{equation*}
provided $\nu \leq \sqrt{\eps}$. 
We now recall the interpolation inequality\footnote{Inequality~\eqref{e:interpolation} is well known. For the sake of completeness, we provide the proof in the appendix.}
\begin{equation}\label{e:interpolation}
   \| \d_{xx} w^{\eps \nu} (t, \cdot) \|_{L^2} \leq C [\| \d_{xxx} w^{\eps \nu} (t, \cdot) \|_{H^{-1}} + \| \d_{x} w^{\eps \nu} (t, \cdot) \|_{L^2}], 
 \end{equation}
 which combined with~\eqref{e:bdh1weenu} eventually implies that the approximating sequence $w^{\eps \nu}$ satisfies  
 \begin{equation} \label{e:stimeunifnu}
     \| \d_t w^{\eps \nu} (t, \cdot) \|^2_{L^2} \leq \hat C \eps^2, 
     \quad 
     \|  w^{\eps \nu} (t, \cdot) \|_{H^2}  \leq \hat C \eps^{-3/2}, \quad \text{for every $t \in [0, T_0]$} 
 \end{equation}
 and provided $\nu$ is sufficiently small. 
 \subsection{Passage to the limit}\label{s:limit}
 We use~\eqref{e:stimeunifnu} and apply the Aubin-Lions Lemma: for instance, we can use formula (0.5) at page 67 in~\cite{Simon} with $p=2$, $B= H^1 (I)$ $X=H^2 (I)$, where $I \subseteq \R_+$ is any bounded interval. We conclude that, up to subsequences (that we do not relabel), $w^{\eps \nu}$ converges as $\nu \to 0^+$ to some limit function $w^\eps$ in $C^0([0, T_0], H^1_{\mathrm{loc}}(\R_+)$. Note the limit function satisfies the initial and boundary conditions in~\eqref{eq:w_eps} and is a distributional solution of the equation at the first line of~\eqref{eq:w_eps}. By extracting (if needed) a further subsequence, we can assume that 
the regularity estimates~\eqref{e:stimeunifnu} persist in the the vanishing $\nu$ limit and hence that $w^\eps$ satisfies 
\begin{equation*}
    \begin{split}
        \eps^2 \| \d_{xxx} w^\eps(t, \cdot)\|_{L^2} 
        & \leq \| \d_t w (t, \cdot) \|_{L^2} + 
        \| \d_x [f'(u_a) w^\eps + g [w^\eps]^2 ] (t, \cdot) \|_{L^2} + \| \mathcal E^{inn} + \mathcal{E}^b \|_{L^2}  \\ & \leq \hat C \quad \text{for every $t \in [0, T_0]$}    
    \end{split}
\end{equation*}
which yields $w^\eps \in L^\infty ([0, T_0]; H^3 (\R_+)).$ Since $w^\eps$ in $C^0([0, T_0], H^1_{\mathrm{loc}}(\R_+))$, by interpolation we conclude that $w^\eps \in C^0([0, T_0], H^2 (\R_+))$. 
\begin{remark}
Note that our construction yields a bound on $\| \d_{xxx} w^\eps(t, \cdot)\|_{L^2} $ which deteriorates in the vanishing $\eps$ limit. However, in \S\ref{s:main} we only use the $H^3$ regularity of $w^\eps$ to ensure that all the integrals we write are well defined, and the final estimate~\eqref{eq:main} provides a bound that is uniform in $\eps$ for the $H^1$ norm only. 
\end{remark}
 
\appendix
\section{Proof of Lemma~\ref{l:bl}}
We first outline the rationale underpinning the proof. Under suitable regularity assumptions, the equation in the first line of~\eqref{eq:bl_eq2} implies 
\begin{equation}\label{e:kappa1}
    f(\bar u^0+V)-f(\bar u^0)+\d_{yy}V= k_1
\end{equation}
for a suitable constant $k_1 \in \R$. The asymptotic condition in the second line of~\eqref{eq:bl_eq2} yields
\begin{equation} \label{e:kappa12}
      k_1 = \lim_{y \to + \infty} \d_{yy} V(y) =0,     
\end{equation}
where to establish the last equality we have again used the asymptotic condition in the second line of~\eqref{eq:bl_eq2}. 
We set 
\begin{equation} \label{e:eFFe}
     F(\bar u^0, V) : = \int_0^{V} [ f(\bar u^0+ \xi)-f(\bar u^0)] d\xi \implies \d_2 F(\bar u^0, V) =
  f(\bar u^0+V)-f(\bar u^0),
\end{equation}
where $\d_2$ denotes the partial derivative with respect to the variable $V$. Next, we point out that 
\begin{equation}\label{e:F}
  F(0) =0, \quad F'(0)=0, \quad \d_{22} F(\bar u^0, V) = f'(\bar u^0+ V) \stackrel{\eqref{eq:c}}{\leq} -c,
      \end{equation}
that is the map $V \mapsto F(\bar u^0, V)$ loosely speaking behaves like a concave-down parabola with the vertex at $V=0$.       
Next, we multiply~\eqref{e:kappa1} times $\d_y V$ to arrive at 
\begin{equation*}
    \d_y \left[ F(\bar u^0, V) + \frac12 [\d_y V]^2 \right] = 0\implies 
    F(\bar u^0, V) + \frac12 [\d_y V]^2  = k_2
\end{equation*}
for a suitable constant $k_2 \in \R$. Using the asymptotic condition in~\eqref{eq:bl_eq2} we conclude that  
$$
  k_2 = \lim_{y \to + \infty }  \frac12 [\d_y V]^2 =0.
$$
Wrapping up, we are led to study
\begin{equation} \label{e:veraeq}
      F(\bar u^0, V) + \frac12 [\d_y V]^2 =0.
\end{equation}
{\bf Step 1:} we establish existence and~\eqref{e:decay}. Note that~\eqref{e:F} implies
\begin{equation}\label{e:F2}
     F \leq 0, \quad \sqrt{- F} \in W^{1 \infty}_{\mathrm{loc}} (\R), \quad F(\bar u^0, 0) =0,  \quad F(\bar u^0, V) =0 \Rightarrow  V=0
     \quad 
     \end{equation}
and separately consider the cases $\bar V \leq 0$ and $\bar V>0$.  
If $\bar V \leq 0$, we consider the Cauchy problem 
\begin{equation} \label{e:caupb}
      \left\{
           \begin{array}{ll}
                   \d_y V = \sqrt{-2 F(\bar u^0, V)} \\
                   V(y=0) = \bar V,  
           \end{array}
      \right.
\end{equation}
which satisfies the assumptions of the Cauchy-Lipschitz-Picard–Lindel\"of Theorem for ODEs due to the second condition in~\eqref{e:F2}. Note that the unique solution of~\eqref{e:caupb} is a monotone non decreasing function, and as such converges to a limit as $y\to + \infty$. If finite, the limit must be $0$, the unique equilibrium point of $F$ due to the last condition in~\eqref{e:F2}. Assume by contradiction that the limit is $+\infty$, then by continuity $V$ must attain the value $0$ at some point. Since the solution of~\eqref{e:caupb} is unique, this implies that $V(y)=0$ for every $y$, and contradicts the assumption that $V$ converges to $+\infty$. 

We are left to  establish~\eqref{e:decay}. Towards this end, we point out that 
\begin{equation*}\begin{split}
    F(\bar u^0, V)&\stackrel{F(\bar u^0, 0)=0}{=} - \int_{V}^0 \d_2 F(\bar u^0, \xi) d \xi \stackrel{F'(0)=0}{=}
   \int_{V}^0 \int_{\xi}^0 \d_{22} F (\bar u^0, \eta) d \eta d \xi  \\ & \stackrel{\eqref{e:F}}{=}
   \int_{V}^0 \int_{\xi}^0 f'(\bar u^0 + \eta) d \eta d \xi \stackrel{\eqref{eq:c}}{\leq} - \frac{c}{2} V^2, 
   \end{split}
\end{equation*}
which implies 
\begin{equation*}
     \sqrt{-2 F (V)} \ge  |V| \sqrt{c}\stackrel{V \leq 0}{=}
     - V \sqrt{c}
\end{equation*}
and by the Comparison Theorem for ODEs applied to~\eqref{e:caupb} this implies 
$V(y) \ge - |\bar V| \exp[- \sqrt{c} y]$, which combined with the inequality $V \leq 0$ leads to~\eqref{e:decay}. 
The analysis of the case $\bar V >0$ is entirely similar and is therefore omitted. \\
{\bf Step 2:} uniqueness. By contradiction, we assume that there are two different solutions $V_{01}$ and $V_{02}$ and we set $Z:= V_{01}-V_{02}$. Note that $Z(0)=0$, 
$\lim_{y \to + \infty} Z(y)=0$ and that  
\begin{equation} \label{e:Ftilde}
      f (\bar u^0 + V_{01}) - f(\bar u^0 + V_{02})  +\d_{yy} Z = k_3 =0,
\end{equation}
where to establish the last equality we have used the asymptotic conditions on $V_{01}$ and $V_{02}$. We write the Taylor expansion with Lagrange remainder
$$
f (\bar u^0 + V_{01}) - f(\bar u^0 + V_{02})= f' (\eta (V_1, V_2)) Z
$$
and  multiply~\eqref{e:Ftilde} times $Z$ and integrate over $\R_+$. Integrating by parts  and using the boundary condition $Z(0)=0$, we get 
$$
- \int_{\R_+} [\d_y Z]^2 =  \int_{\R_+} \underbrace{- f' (\eta (V_1, V_2))}_{\ge c >0} Z^2,  
$$ 
which yields $Z \equiv 0$ and concludes the proof of Lemma~\ref{l:bl}.

\section{Estimates on $J_1, \dots, J_6$} \label{a:J16}
In this appendix we control the terms $J_1, \dots, J_6$ obtained multiplying~\eqref{e:KdVt} times $\d_t Z^{\eps \nu}$, where $Z^{\eps \nu}$ is defined in~\eqref{e:Zeta}, and integrating in space and time. We have 
\begin{equation*}
    \begin{split}
J_1= \int_0^{T} \! \! \! \! \int_{\R_+} \d_t Z^{\eps \nu} \d_{tt} w^{\eps \nu} (t, x) dx dt 
= \int_0^{T}  \! \! \! \! \int_{\R_+} \d_t Z^{\eps \nu} \d_{tx}  Z^{\eps \nu}(t, x)   dx dt 
\stackrel{\d_t Z^{\eps \nu}(t, 0)=0}{=}
0      
    \end{split}
\end{equation*}
and (integrating by parts with respect to the $x$ variable) 
\begin{equation*}
    \begin{split}
J_2 & = \int_0^{T} \! \! \! \! \int_{\R_+} \d_t Z^{\eps \nu} \d_{tx} [f'(u_a) w ]dx dt 
\stackrel{\d_t Z^{\eps \nu}(t, 0)=0}{=}-  \int_0^{T}  \! \! \! \! \int_{\R_+} \d_{tt} w^{\eps \nu} \d_t [f'(u_a) w^{\eps \nu} ]dx dt \\
& = \underbrace{ - \int_0^{T} \! \! \! \! \int_{\R_+} \d_{tt} w^{\eps \nu} \d_t [f'(u_a)] w^{\eps \nu} dx dt}_{:= J_{21}}  \underbrace{- 
\int_0^{T} \! \! \! \!\int_{\R_+}  f'(u_a)\d_{tt} w^{\eps \nu}  \d_t w^{\eps \nu}  dx dt }_{: = J_{22}}.
    \end{split}
\end{equation*}
Note that integrating by parts with respect to the $t$ variable we get 
\begin{equation*}
    \begin{split}
J_{21} & \stackrel{w^{\eps \nu}(0, \cdot) =0}{=} - 
\int_{\R_+} \d_t w^{\eps \nu}  \d_{t}[f'(u_a)] w^{\eps \nu} (T, x)  dx  + \int_0^{T} \! \! \! \! \int_{\R_+} \d_t w^{\eps \nu}  \d_{tt}[f'(u_a)] w^{\eps \nu}  dx dt  \\
      & \quad \quad +
    \int_0^{T} \! \! \! \! \int_{\R_+} |\d_t w^{\eps \nu}|^2  \d_{t}[f'(u_a)]  dx dt,
    \end{split}
\end{equation*} 
whence, using among other things the Young Inequality,  
\begin{equation*}
    \begin{split}
  |J_{21}| & \stackrel{\eqref{e:decayV0t},\eqref{e:decayV0tt}}{\leq} \hat C \| \d_t w^{\eps \nu}(T, \cdot) \|_{L^2} 
  \| w^{\eps \nu}(T, \cdot) \|_{L^2} + \hat C \int_0^T \| \d_t w^{\eps \nu} (t, \cdot) \|^2_{L^2} dt \\
  & \qquad \qquad + 
  \hat C \int_0^T \|  w^{\eps \nu} (t, \cdot) \|^2_{L^2} dt
  \\
& \stackrel{\eqref{e:bdh1weenu}}{\leq}   \hat C \eps \| \d_t w^{\eps \nu}(T, \cdot) \|^2_{L^2} + \hat C \eps^2 +  \hat C \int_0^T \| \d_t w^{\eps \nu} (t, \cdot) \|^2_{L^2} dt. 
  \end{split}
\end{equation*}
Also,  
\begin{equation*}
    \begin{split}
J_{22} & = - \frac{1}{2}\int_0^{T} \! \! \! \! \int_{\R_+} \d_t \big[ [\d_t w^{\eps \nu} ]^2 \big] f'(u_a) dx dt \\
& =
 \frac{1}{2}\int_0^{T} \! \! \! \! \int_{\R_+} [-f'(u_a)][\d_t w^{\eps \nu} ]^2 (T, x) dx 
 + \frac{1}{2}\int_0^{T} \! \! \! \! \int_{\R_+} f'(u_a)][\d_t w^{\eps \nu} ]^2 (0, x) dx \\ & \quad +
 \frac{1}{2}\int_0^{T} \! \! \! \! \int_{\R_+}  [\d_t w ]^2 \d_t [f'(u_a)] dx dt,
    \end{split}
\end{equation*}
whence 
\begin{equation*}
    \begin{split}
    J_{22} \stackrel{\eqref{eq:c},\eqref{e:stimawt0}}{\ge}
    \frac{c}{2} \| \d_t w^{\eps \nu} (T, \cdot) \|^2_{L^2} - \hat C\eps^4  -
    \hat C \int_0^T \| \d_t w^{\eps \nu} (t, \cdot) \|^2_{L^2} dt. 
 \end{split}
\end{equation*}
We also have 
\begin{equation*}
    \begin{split}
J_3 & = \int_0^{T} \! \! \! \! \int_{\R_+} \d_t Z^{\eps \nu} \d_{tx} [g(u_a, w^{\eps \nu}) [w^{\eps \nu}]^2 ]dx dt \\ & 
\stackrel{\d_t Z^{\eps \nu}(t, 0)=0}{=}-  \int_0^{T}  \! \! \! \! \int_{\R_+} \d_{tt} w^{\eps \nu} \d_t \big[g(u_a, w^{\eps \nu})[ w^{\eps \nu}]^2 \big]dx dt \\
& = \underbrace{ -  \int_0^{T} \! \! \! \! \int_{\R_+} \d_{tt} w^{\eps \nu} \d_t w^{\eps \nu}  \big[ 2 g(u_a, w^{\eps \nu}) w^{\eps \nu} + [w^{\eps \nu}]^2 \d_w g (u_a, w^{\eps \nu})  \big] dx dt}_{:= J_{31}} \\ &  \quad   \underbrace{- 
\int_0^{T} \! \! \! \!\int_{\R_+}  [w^{\eps \nu}]^2 \d_{tt} w^{\eps \nu}  \d_{u_a} g (u_a, w^{\eps \nu}) \d_t u_a   dx dt }_{: = J_{32}}
    \end{split}
\end{equation*}
and (integrating by parts with respect to the $t$ variable) 
\begin{equation*}
    \begin{split}
J_{31} & = - \frac{1}{2} \int_0^{T} \! \! \! \! \int_{\R_+} \d_{t} 
\big[ [\d_t w^{\eps \nu} ]^2 \big]  \big[ 2 g(u_a, w^{\eps \nu}) w^{\eps \nu} + [w^{\eps \nu}]^2 \d_w^{\eps \nu}  g (u_a, w^{\eps \nu})  \big] dx dt \\ & \stackrel{w^{\eps \nu}(0, \cdot)=0}={} - 
\underbrace{\frac{1}{2} \int_{\R_+} 
 [\d_t w^{\eps \nu} ]^2  [ 2 g(u_a, w^{\eps \nu}) w^{\eps \nu} + [w^{\eps \nu}]^2 \d_w  g (u_a, w^{\eps \nu})  ] (T, x) dx }_{: = J_{311}}
  \\
 & \quad +\underbrace{ \frac{1}{2} \int_0^{T} \! \! \! \! \int_{\R_+}  
 [\d_t w^{\eps \nu} ]^2  \d_t [ 2 g(u_a, w^{\eps \nu}) w^{\eps \nu} + [w^{\eps \nu}]^2 \d_w  g (u_a, w^{\eps \nu})  ] (t, x) dx dt}_{: = J_{312}}. 
    \end{split}
\end{equation*}
Note that 
$$
  |J_{311}| \stackrel{\eqref{e:linftyK}}{\leq}
  \hat C \eps \| \d_t w^{\eps \nu} (T, \cdot) \|^2_{L^2}
$$
and that 
\begin{equation*}
    \begin{split}
    |J_{312}| & \leq \hat C \int_0^T \big[ \| \d_t u_a \|_{L^\infty} + \| \d_t w^{\eps \nu} \|_{L^\infty}\big]  \| \d_{t} w^{\eps \nu} (t, \cdot) \|^2_{L^2} dt \\ &
    \stackrel{\eqref{e:decayV0t},\eqref{e:wtlinfty}}{\leq}
     \hat C \int_0^T \big[ \sqrt{A\eps} + 1   \big] \| \d_{t} w^{\eps \nu} (t, \cdot) \|^2_{L^2} dt
\end{split}
\end{equation*}
Integrating again by parts with respect to the $t$ variable we get 
\begin{equation*}
    \begin{split}
J_{32} & \stackrel{w^{\eps \nu}(0, \cdot)=0}{=} \!\!\! - \! \int_{\R_+}  [w^{\eps \nu}]^2 \d_{t} w^{\eps \nu}  \d_{u_a} g (u_a, w^{\eps \nu}) \d_t u_a   (T, x) dt \\ & \qquad +
2\int_0^{T} \! \! \! \!\int_{\R_+}  w^{\eps \nu} [\d_{t} w^{\eps \nu}]^2  \d_{u_a} g (u_a, w^{\eps \nu}) \d_t u_a   dx dt \\
& \quad \quad +
\int_0^{T} \! \! \! \!\int_{\R_+}  [w^{\eps \nu}]^2 \d_{t} w^{\eps \nu} \d_t[ \d_{u_a} g (u_a, w^{\eps \nu}) \d_t u_a]   dx dt,
    \end{split}
\end{equation*}
whence, using the H\"older and the Young Inequalities Inequality,  
\begin{equation*}
    \begin{split}
|J_{32}| & \stackrel{\text{H\"older},\eqref{e:decayV0t},
\eqref{e:linftyK}}{\leq}
      \hat C \eps  \|  w^{\eps \nu} (T, \cdot)\|_{L^2}
      \| \d_t w^{\eps \nu} (T, \cdot)\|_{L^2}
      + \hat C  \eps \int_0^T \| \d_t w^{\eps \nu} (t, \cdot) \|^2_{L^2 }
    dt \\
    & \qquad \qquad + \hat C  \int_0^T \|  w^{\eps \nu} (t, \cdot)\|_{L^2} \| \d_t w^{\eps \nu} (t, \cdot) \|^2_{L^2 }\\
    & 
    \stackrel{\text{Young},\eqref{e:bdh1weenu}}{\leq}
    \hat C \eps  \|  w^{\eps \nu} (T, \cdot)\|_{L^2} + \hat C \eps^2 + 
     \hat C  \eps \int_0^T \| \d_t w^{\eps \nu} (t, \cdot) \|^2_{L^2 }.
    \end{split}
\end{equation*}
We also have 
\begin{equation*}
    \begin{split}
    J_4&= 
\eps^2 \int_0^{T} \! \! \! \! \int_{\R_+} \d_t Z^{\eps \nu} \d_{txxx} w^{\eps \nu} (t, x) dx dt \\ & 
\stackrel{\d_t Z^{\eps \nu}(t, 0)=0}{=} - \eps^2 \int_0^{T}  \! \! \! \! \int_{\R_+} \d_{tt} w^{\eps \nu}  \d_{txx} w^{\eps \nu}(t, x)   dx dt  \\ & \stackrel{\d_{tt}w^{\eps \nu}(t, 0)=0}{=}
   \eps^2 \int_0^{T}  \! \! \! \! \int_{\R_+} \d_{ttx} w^{\eps \nu}  \d_{tx} w^{\eps \nu}(t, x) dx dt= \frac{\eps^2}{2} \int_0^{T}  \! \! \! \! \int_{\R_+} \d_{t} \big[ [\d_{tx} w^{\eps \nu}]^2 \big] dx dt 
   \\ & = \frac{\eps^2}{2} \left[
   \int_{\R_+} [\d_{tx} w^{\eps \nu}]^2(T, x) dx  -
    \int_{\R_+} [\d_{tx} w^{\eps \nu}]^2(0, x) dx 
   \right]
    \end{split}
\end{equation*}
whence 
$$
  J_4 \stackrel{\eqref{e:stimawtx0}}{\ge} \frac{\eps^2}{2} 
   \int_{\R_+} [\d_{tx} w^{\eps \nu}]^2(T, x) dx - \hat C \eps^2
$$
Also, 
\begin{equation*}
    \begin{split}
    J_5&= 
- \nu \int_0^{T} \! \! \! \! \int_{\R_+} \d_t Z^{\eps \nu} \d_{ttxx} w^{\eps \nu} (t, x) dx dt 
\stackrel{\d_t Z^{\eps \nu}(t, 0)=0}{=} \nu \int_0^{T}  \! \! \! \! \int_{\R_+} \d_{tt} w^{\eps \nu}  \d_{ttx} w^{\eps \nu}(t, x)   dx dt \\ 
& = \frac{\nu}{2} \int_0^{T}  \! \! \! \! \int_{\R_+} \d_x \big[   [\d_{tt} w^{\eps \nu}]^2 \big](t, x)    dx dt
\stackrel{\d_{tt}w^{\eps \nu}(t, 0)=0}{=}0. 
    \end{split}
\end{equation*}
Finally, we have 
\begin{equation*}
    \begin{split}
    & J_6= 
 \int_0^{T} \! \! \! \! \int_{\R_+} \d_t Z^{\eps \nu} [\d_t \mathcal E^{inn}+ \d_{t}  \mathcal E^b ] dx dt  \stackrel{\eqref{eq:ein},\eqref{eq:eb}}{=}\eps^2 
   \underbrace{\int_0^{T} \! \! \! \! \int_{\R_+} \d_t Z^{\eps \nu} \d_{txxx} u  dx dt}_{: = I_{61}} \\ & 
   + 
\underbrace{\int_0^{T} \! \! \! \! \int_{\R_+} \d_t Z^{\eps \nu}  \d_{tt} V dx dt}_{: = I_{62}} +
   \underbrace{\int_0^{T} \! \! \! \! \int_{\R_+} \d_t Z^{\eps \nu} \d_{tx} H (u, V, u^0) \big] dx dt}_{: = I_{63}}
    \end{split}
\end{equation*}
provided $H$ is the same as in~\eqref{eq:acca}. We have
\begin{equation*}
    \begin{split}
    J_{61}& \stackrel{Z^{\eps \nu}(t, 0) =0}{=} \eps^2 
 \int_0^{T} \! \! \! \! \int_{\R_+} \d_{tt} w \d_{txx} u dx dt = \eps^2 
 \int_{\R_+} \d_{t} w^{\eps \nu} \d_{txx} u (T, x ) dx
 \\ & - \eps^2 
 \int_{\R_+} \d_{t} w^{\eps \nu}(0, \cdot) \d_{txx} u_{in} ( x ) dx -
 \eps^2 
 \int_0^{T} \! \! \! \! \int_{\R_+} \d_{t} w^{\eps \nu} \d_{ttxx} u dx dt,
    \end{split}
\end{equation*}
whence, using the Young Inequality,  
\begin{equation*}
    \begin{split}
|J_{61}| \stackrel{\eqref{e:stimawt0}}{\leq}
      \hat C \eps^2 \| \d_t w^{\eps \nu} (T, \cdot)\|^2_{L^2} + \hat C \eps^{2}
      + \hat C \eps^2 \int_0^T \| \d_t w^{\eps \nu} (t, \cdot) \|^2_{L^2 }
    dt. \end{split}
\end{equation*}
Also,
\begin{equation*}
    \begin{split}
    J_{62}&= 
 \int_0^{T} \! \! \! \! \int_{\R_+} \d_t \big[ Z^{\eps \nu} \d_{tt} V \big] dx dt 
  - \int_0^{T} \! \! \! \! \int_{\R_+} Z^{\eps \nu}   \d_{ttt} V \ dx dt \\
    & =  \int_{\R_+} Z^{\eps \nu} \d_{tt} V  (T, x) \ dx -  
       \int_{\R_+}  Z^{\eps \nu} \d_{tt} V  (T, x) (0, x) \ dx  - \int_0^{T} \! \! \! \! \int_{\R_+} Z^{\eps \nu}   \d_{ttt} V \ dx dt.
    \end{split}
\end{equation*}
We use the H\"older Inequality to get the estimate 
$$
|Z^{\eps \nu} (t, x)| \leq \sqrt{x} \| \d_t w^{\eps \nu} (t, \cdot) \|_{L^2},
$$
which owing to~\eqref{e:decayV0tt} implies 
$$
  \left| \int_{\R_+} Z^{\eps \nu} \d_{tt} V  (T, x) \ dx \right| \leq C \eps^{3/2} \| \d_t w^{\eps \nu} (T, \cdot) \|_{L^2} 
$$
and similarly for the other terms in $J_{62}$. Due to~\eqref{e:stimawt0} and to the Young Inequality we conclude that 
$$
  |J_{62}|  \leq \hat  C \eps \| \d_t w^{\eps \nu} (T, \cdot) \|^2_{L^2} + \hat C\eps^2 + \hat C \eps \int_0^T \| \d_t w^{\eps \nu} (t, \cdot) \|^2_{L^2} dt . 
$$
We also have 
\begin{equation*}
    \begin{split}
    J_{63}& \stackrel{Z^{\eps \nu}(t, 0)=0}{=} - 
 \int_0^{T} \! \! \! \! \int_{\R_+} \d_{tt} w^{\eps \nu} \d_t H  dxdt \\ & = - \int_{\R_+} \d_{t} w^{\eps \nu} \d_t H (T, x) dx + \int_{\R_+} \d_{t} w^{\eps \nu} \d_t H (0, x) dx +  \int_0^{T} \! \! \! \! \int_{\R_+} \d_{t} w^{\eps \nu} \d_{tt} H  dxdt. 
    \end{split}
\end{equation*}
To control the first term in the above expression we recall the explicit expression of $H$, that is~\eqref{eq:acca}, and then~\eqref{e:decayV},\eqref{e:decayV0t},~\eqref{e:decayV0tt} and~\eqref{e:Lagrange}. We conclude that  
$$
  |\d_{tt} H(t, x)|, |\d_{t} H(t, x)| \leq \hat C x \exp \left[- \frac{\sqrt{c} x}{4 \eps}   \right] \implies \| \d_{tt} H(t, \cdot)\|_{L^2}, \|\d_{t} H(t, x)\|_{L^2} \leq \hat C e^{3/2}
$$
and from this, the H\"older and Young Inequalities   and~\eqref{e:stimawt0} we eventually conclude that 
$$
  |J_{63}| \leq \hat C \eps^2 + \hat C \eps \| \d_t w^{\eps \nu} (T, \cdot) \|^2_{L^2} + \hat C \eps \int_0^T  \| \d_t w^{\eps \nu} (T, \cdot) \|^2_{L^2} (t, \cdot) dt.  
$$
By combining the above expressions, we arrive at~\eqref{e:regfinale} provided $\eps$ is sufficiently small. 
\section{Proof of~\eqref{e:interpolation}}
Given any smooth and compactly supported function $z$ we have 
\begin{equation*}
    \| \d_{xx} z \|_{L^2} = \sup_{\varphi \in L^2\setminus \{ 0\} } \frac{\displaystyle{\int_{\R_+}
    \d_{xx} z (x) \varphi (x) dx}}{\|\varphi \|_{L^2}}. 
\end{equation*}
Given $\varphi \in L^2$, we set 
$$
  \psi (x) : = \int_0^x \exp [-(x-y)] \varphi (y) dy, 
$$
which satisfies 
$$
   \psi \in H^1_0 (\R_+), \quad \psi + \psi' = \varphi, \quad \| \psi \|_{L^2} \leq \| \varphi \|_{L^2}, \quad \| \psi' \|_{L^2} \leq 2 \| \varphi \|_{L^2}.
$$
To establish the $L^2$ bound on $\psi$, we write $\psi = \varphi {1}_{\R_+} \ast \eta$, with $\eta (x) = \exp[-x] {1}_{\R_+}$, and apply the Young Theorem on convolution. We then have 
\begin{equation}
    \begin{split}
    \int_{\R_+} \d_{xx} z (x) \varphi(x) dx & =   \int_{\R_+} \d_{xx} z (x) [\psi(x) + \psi'(x) ] dx \\
    & = - \int_{\R_+} [ \d_{x} z (x)\psi'(x) dx + \d_{xxx} z (x)\psi(x) ] dx \\ & \leq \| \d_{x} z \|_{L^2} \| \psi' \|_{L^2 } 
    + \| \d_{xxx} z \|_{H^{-1} }\| \psi \|_{H^1 } \phantom{\int}
    \\ & \leq 3  \big[\| \d_{xxx} z  \|_{H^{-1}} + \| \d_{x} z  \|_{L^2} \big] \| \varphi \|_{L^2}, \phantom{\int}
    \end{split}
\end{equation}
which by density yields~\eqref{e:interpolation}. 
\section*{Acknowledgments}
PA and LVS are members of the GNAMPA group of INdAM. PA and LVS  acknowledge financial support from the Italian Ministry of University and Research (MUR) through the Project 2022YXWSLR \emph{Boundary analysis for dispersive and viscous fluids}. 
PA also acknowledges partial support by the Italian Ministry of University and Research (MUR) through the Excellence Department Project awarded to GSSI, CUP
D13C22003740001.
This work was initiated when LVS was visiting GSSI: its kind hospitality is gratefully acknowledged. 
\bibliographystyle{plain}
\bibliography{PPL}
\end{document}